\newcommand{\A}{\ensuremath{\mathbf{A}}}
\newcommand{\D}{\ensuremath{\mathbf{D}}}
\newcommand{\I}{\ensuremath{\mathbf{I}}}
\newcommand{\M}{\ensuremath{\mathbf{M}}}
\renewcommand{\SS}{\ensuremath{\mathbf{S}}}
\newcommand{\U}{\ensuremath{\mathbf{U}}}
\newcommand{\W}{\ensuremath{\mathbf{W}}}
\renewcommand{\aa}{\ensuremath{\mathbf{a}}}
\renewcommand{\b}{\ensuremath{\mathbf{b}}}
\renewcommand{\c}{\ensuremath{\mathbf{c}}}
\newcommand{\e}{\ensuremath{\mathbf{e}}}
\newcommand{\g}{\ensuremath{\mathbf{g}}}
\newcommand{\p}{\ensuremath{\mathbf{p}}}
\newcommand{\uu}{\ensuremath{\mathbf{u}}}
\newcommand{\vv}{\ensuremath{\mathbf{v}}}
\newcommand{\w}{\ensuremath{\mathbf{w}}}
\newcommand{\x}{\ensuremath{\mathbf{x}}}
\newcommand{\y}{\ensuremath{\mathbf{y}}}
\newcommand{\z}{\ensuremath{\mathbf{z}}}
\newcommand{\0}{\ensuremath{\mathbf{0}}}
\newcommand{\1}{\ensuremath{\mathbf{1}}}
\newcommand{\bbR}{\ensuremath{\mathbb{R}}}
\newcommand{\calO}{\ensuremath{\mathcal{O}}}
\newcommand{\calP}{\ensuremath{\mathcal{P}}}
\newcommand{\abs}[1]{\left\lvert#1\right\rvert}
\newcommand{\norm}[1]{\left\lVert#1\right\rVert}
\newtheorem{thm}{Theorem}
\newtheorem{lem}[thm]{Lemma}
\theoremstyle{remark}
\newtheorem*{rmk*}{Remark}
\DeclareMathOperator*{\argmax}{arg\,max}
\DeclareMathOperator*{\argmin}{arg\,min}
\newcommand{\ie}{i.e.\@}
\newcommand{\eg}{e.g.\@}
\newcommand{\te}{\tilde{\epsilon}}
\newcommand{\tv}{\tilde{\vv}}
\newcommand{\tw}{\tilde{\w}}
\newcommand{\normM}[1]{\norm{#1}_{\M_{\lambda}^{-1}}}
\newcommand{\ceil}[1]{\left\lceil #1 \right\rceil}
\icmltitlerunning{Efficient coordinate-wise leading eigenvector computation}
\begin{document} 

\twocolumn[
\icmltitle{Efficient coordinate-wise leading eigenvector computation}



\icmlsetsymbol{equal}{*}

\begin{icmlauthorlist}
\icmlauthor{Jialei Wang}{equal,uchicago}
\icmlauthor{Weiran Wang}{equal,ttic}
\icmlauthor{Dan Garber}{ttic}
\icmlauthor{Nathan Srebro}{ttic}
\end{icmlauthorlist}

\icmlaffiliation{uchicago}{University of Chicago, 5801 S Ellis Ave, Chicago, IL 60637}
\icmlaffiliation{ttic}{Toyota Technological Insititute at Chicago, 6045 S Kenwood Ave, Chicago, IL 60637}

\icmlcorrespondingauthor{Nathan Srebro}{nati@ttic.edu}

\icmlkeywords{eigenvalue problem, power method, shift-and-invert, coordinate descent}

\vskip 0.3in
]



\printAffiliationsAndNotice{\icmlEqualContribution} 

\begin{abstract}
We develop and analyze efficient "coordinate-wise" methods for finding the leading eigenvector, where each step involves only a vector-vector product.  We establish global convergence with overall runtime guarantees that are at least as good as Lanczos's method and dominate it for slowly decaying spectrum. Our methods are based on combining a shift-and-invert approach with coordinate-wise algorithms for linear regression. 
\end{abstract}

\section{Introduction}
\label{sec:intro}

Extracting the top eigenvalues/eigenvectors of a large symmetric matrix is a fundamental step in various machine learning algorithms. One prominent example of this problem is principal component analysis (PCA), in which we extract the top eigenvectors of the data covariance matrix, and there has been continuous effort in developing efficient stochastic/randomized algorithms for large-scale PCA (e.g., \citealp{Garber_16a, Shamir15, Allen-ZhuLi16a}). 
The more general eigenvalue problems for large matrices without the covariance structure is relatively less studied. The method of choice for this problem has been the power method, or the faster but often less known Lanczos algorithm~\citep{GolubLoan96a}, which are based on iteratively computing matrix-vector multiplications with the input matrix until the component of the vector that lies in the tailing eigenspace vanishes. However, for very large-scale and dense matrices, even computing a single matrix-vector product is expensive. 

An alternative is to consider much cheaper vector-vector products, \ie, instead of updating all the entries in the vector on each iteration by a full matrix-vector product, we consider the possibility of only updating one coordinate, by only computing the inner product of single row of the matrix with the vector. Such operations do not even require to store the entire matrix in memory. Intuitively, this may result in an overall significant speedup, in certain likely scenarios, since certain coordinates in the matrix-vector product are more valuable than others for making local progress towards converging to the leading eigenvector.
Indeed, this is the precise rational behind coordinate-descent methods that were extensively studied and are widely applied to convex optimization problems, see~\citet{Wright15a} for a comprehensive survey. Thus, given the structure of the eigenvalue problem which is extremely suitable for coordinate-wise updates, and the celebrated success of coordinate-descent methods for convex optimization, a natural question is whether such updates can be applied for eigenvector computation with provable global convergence guarantees, despite the inherent non-convexity of the problem.

Recently,~\citet{Lei_16a} have proposed two such methods, Coordinate-wise Power Method (CPM) and Symmetric Greedy Coordinate Descent (SGCD). Both methods update on each iteration only $k$ entries in the vector, for some fixed $k$. CPM updates on each iteration the $k$ coordinates 
that would change the most under one step of power method, 
 while SGCD applies a greedy heuristic for choosing the coordinates to be updated. The authors show that CPM enjoys a global convergence rate, with rate similar to the classical power iterations algorithm provided that $k$, the number of coordinates to be updated on each iteration, is sufficiently large (or equivalently, the "noise" outside the $k$ selected coordinate is sufficiently small). In principle, this might force $k$ to be as large as the dimension $d$, and indeed in their experiments they set $k$ to grow linearly with $d$, which is overall not significantly faster than standard power iterations, and does not truly capture the concept of coordinate updates proved useful to convex problems. The second algorithm proposed in~\citet{Lei_16a}, SGCD, is shown to converge \textit{locally} with a linear rate already for $k=1$ (i.e., only a single coordinate is updated on each iteration), however this results assume that the method is initialized with a vector that is already sufficiently close (in a non-trivial way) to the leading eigenvector. The dependence of CPM and SGCD on the 
inverse relative eigengap\footnote{Defined as $\frac{\sigma_1 (\A)}{\sigma_1 (\A) - \sigma_2 (\A)}$ for a positive semidefinite matrix $\A$, where $\sigma_i (\A)$ is the $i$-th largest eigenvalue of $\A$.} 
of the input matrix is similar to that of the power iterations algorithm, i.e. linear dependence, which in principal is suboptimal, since square-root dependence can be obtained by methods such as the Lanczos algorithm.

We present \textit{globally-convergent} coordinate-wise algorithms for the leading eigenvector problem which resolves the abovementioned concerns and significantly improve over previous algorithms. 
Our algorithms update only a single entry at each step and enjoy linear convergence. Furthermore, for a particular variant, the convergence rate depends only on the square-root of the inverse relative eigengap, 
yielding a total runtime that dominates that of the standard power method and competes with that of the Lanczos's method. 
In Section~\ref{sec:shift-and-invert}, we discuss the basis of our algorithm, the \textit{shift-and-invert power method}~\citep{GarberHazan15c, Garber_16a}, which transforms the eigenvalue problem into a series of convex least squares problems. 
In Section~\ref{sec:coord-descent}, we show the least squares problems can be solved using efficient coordinate descent methods that are well-studied for convex problems. This allow us to make use of principled coordinate selection rules for each update, all of which have established convergence guarantees.

We provide a summary of the time complexities of different globally-convergent methods in Table~\ref{t:complexity}. In particular, it is observable that in cases where either the spectrum of the input matrix or the magnitude of its diagonal entries is slowly decaying, 
our methods can yield provable and significant improvements over previous methods. 
For example, for a spiked covariance model whose eigenvalues are 
$\rho_1 > \rho_1 - \Delta = \rho_2=\rho_3=\dots$, our algorithm (SI-GSL) can have a runtime independent of the eigengap $\Delta$, while the runtime of Lanczos's method depends on $\frac{1}{\sqrt{\Delta}}$. 
We also verify this intuition empirically via numerical experiments.


\begin{table}
\centering
\caption{Time complexity (total number of coordinate updates) for finding an estimate $\w$ satisfying $(\w^{\top}\p_1)^2 \geq 1-\epsilon$ with at least constant probability, where $\p_1$ is the leading eigenvector of a positive semidefinite matrix $\A \in \bbR^{d\times d}$, with eigenvalues $\rho_1,\rho_2,\ldots$ in descending order. 
Since all methods are randomized, we assume all are initialized with a random unit vector.}
\label{t:complexity}
\begin{tabular}{|c|c|} \hline
Method & Time complexity \\ \hline 
Power method & $\calO \left( \frac{\rho_1}{\rho_1 - \rho_2} \cdot d^2 \cdot \log \frac{d}{\epsilon} \right)$ \\
Lanczos & $\calO \left( \sqrt{ \frac{\rho_1}{\rho_1 - \rho_2} }  \cdot d^2 \cdot \log \frac{d}{\epsilon} \right)$  \\
Ours (SI-GSL) & $\calO \left(  \frac{\rho_1 - \frac{1}{d} \sum_{i=1}^d \rho_i}{ \rho_1 - \rho_2 } \cdot d^2 \cdot \log \frac{d}{\epsilon} \right)$  \\ 
Ours (SI-ACDM) & $\calO \left(   \frac{ \frac{1}{d} \sum_{i=1}^d \sqrt{ \rho_1 -  \A_{ii} }}{ \sqrt{ \rho_1 - \rho_2 } } \cdot d^2 \cdot \log \frac{d}{\epsilon} \right)$  \\
\hline
\end{tabular}
\end{table}

\paragraph{Notations} We use boldface uppercase letters (\eg, $\A$) to denote matrices, and boldface lowercase letters (\eg, $\x$) to denote vectors. For a positive definite matrix $\M$, the vector norm $\norm{\cdot}_{\M}$ is defined as $\norm{\w}_{\M} = \sqrt{ \w^\top \M \w } = \norm{\M^{\frac{1}{2}} \w }$ for any $\w$. We use $\A[ij]$ to denote the element in row $i$ and column $j$ of the matrix $\A$, and use $\x[i]$ to denote the $i$-th element of the vector $\x$ unless stated otherwise. Additionally, $\A[i:]$ and $\A[:j]$ denote the $i$-th row and $j$-th column of the matrix $\A$ respectively.

\paragraph{Problem formulation} 
We consider the task of extracting the top eigenvector of a symmetric positive definite matrix $\A \in \bbR^{d}$ (extensions to other setting are discussed in Section~\ref{sec:algorithm-extension}). Let the complete set of eigenvalues of $\A$ be $\rho_1 \ge \rho_2 \ge \dots \ge \rho_d \ge 0$, with corresponding eigenvectors $\p_1, \dots, \p_d$ which form an orthonormal basis of $\bbR^d$. Without loss of generality, we assume $\rho_1 \le 1$ (which can always be obtained by rescaling the matrix). Furthermore, we assume the existence of a positive eigenvalue gap $\Delta:= \rho_1 - \rho_2 > 0$ so that the top eigenvector is unique.
\section{Shift-and-invert power method}
\label{sec:shift-and-invert}

In this section, we introduce the shift-and-invert approach to the eigenvalue problem and review its analysis, which will be the basis for our algorithms.

The most popular iterative algorithm for the leading eigenvalue problem is the power method, which iteratively performs the following matrix-vector multiplications and normalization steps
\begin{align*}
  \tw_{t} \leftarrow \A \w_{t-1}, \qquad \w_{t} \leftarrow  \frac{\tw_{t}}{\norm{\tw_{t}}}, \qquad \text{for} \quad t=1,\dots.
\end{align*}
It can be shown that the iterates become increasingly aligned with the eigenvector corresponding to the largest eigenvalue in magnitude, and the number of iterations needed to achieve 
$\epsilon$-suboptimality in alignment is $\calO\left( \frac{\rho_1}{\Delta} \log \frac{\vert{\w_0^{\top}\p_1}\vert}{\epsilon} \right)$~\citep{GolubLoan96a}\footnote{We can always guarantee that $\vert{\w_0^{\top}\p_1}\vert = \Omega(1/\sqrt{d})$ by taking $\w_0$ to be a random unit vector.}. We see that the computational complexity depends linearly on $\frac{1}{\Delta}$, and thus power method converges slowly if the gap is small.

Shift-and-invert~\citep{GarberHazan15c, Garber_16a} can be viewed as a pre-conditioning approach which improves the dependence of the time complexity on the eigenvalue gap. 
The main idea behind this approach is that, instead of running power method on $\A$ directly, we can equivalently run power method on the matrix $(\lambda \I - \A)^{-1}$ where $\lambda > \rho_1$ is a shifting parameter. Observe that $(\lambda \I - \A)^{-1}$ has exactly the same set of eigenvectors as $\A$, and its eigenvalues are
\begin{align*}
  \beta_1 \ge \beta_2 \ge \dots \ge \beta_d > 0, \qquad \text{where}\quad \beta_i = \frac{1}{\lambda - \rho_i}.
\end{align*}
If we have access to a $\lambda$ that is slightly larger than $\rho_1$, and in particular if $\lambda-\rho_1 = \calO (1) \cdot \Delta$, then the inverse relative eigengap of $(\lambda \I - \A)^{-1}$ is $\frac{\beta_1}{\beta_1-\beta_2} = O(1)$, which means that the power method, applied to this shift and inverted matrix, will converge to the top eigenvector in only a poly-logarithmic number of iterations, in particular, without linear dependence on $1/\Delta$.

In shift-and-invert power method, the matrix-vector multiplications have the form 
$\tw_{t} \leftarrow (\lambda \I - \A)^{-1} \w_{t-1}$, which is equivalent to solving the convex least squares problem 
\begin{align} \label{e:lsq}
  \tw_{t} \leftarrow \argmin_{\w}\; \frac{1}{2} \w^\top (\lambda \I - \A) \w - \w_{t-1}^\top \w.
\end{align}
Solving such least squares problems exactly could be costly itself if $d$ is large. 
Fortunately, power method with approximate matrix-vector multiplications still converges, provided that the errors in each step is controlled; analysis of inexact power method together with applications to PCA and CCA can be found in~\citet{HardtPrice14a,Garber_16a,Ge_16a,Wang_16b}.

\begin{algorithm}[ht!]
  \caption{The shift-and-invert preconditioning meta-algorithm for extracting the top eigenvector of $\A$.}
  \label{alg:meta-shift-and-invert}
  \renewcommand{\algorithmicrequire}{\textbf{Input:}}
  \renewcommand{\algorithmicensure}{\textbf{Output:}}
  \begin{algorithmic}
    \REQUIRE Data matrix $\A$, an iterative least squares optimizer (LSO), a lower estimate $\tilde{\Delta}$ for $\Delta := \rho_1 - \rho_2$ such that $\tilde{\Delta} \in [c_1 \Delta, \, c_2 \Delta]$, and $\underline{\sigma} =  1 + \frac{1 - c_2}{c_2} \tilde{\Delta}$.
    \STATE Initialize $\tw_0 \in \bbR^{d}, \qquad \w_0 \leftarrow \frac{\tw_0}{\norm{\tw_0}}$ 
    \STATE \textbf{// Phase I: locate a $\lambda = \rho_1 + \calO(1) \cdot \Delta$}
    \STATE $s \leftarrow 0, \qquad \lambda_{(0)} \leftarrow 1 + \tilde{\Delta}$
    \REPEAT 
    \STATE $s \leftarrow s+1$ 
    \STATE \textbf{// Power method on $( \lambda_{(s)} \I - \A )^{-1}$ in crude regime}
    \FOR{$t=(s-1) m_1+1, \dots, s m_1$}
    \STATE Optimize the least squares problem with LSO 
    \begin{align*}
      \min_{\w}\ f_t(\w) := \frac{1}{2} \w^\top (\lambda_{(s-1)} \I - \A) \w - \w_{t-1}^\top \w,
    \end{align*}
    with initialization $\frac{ \w_{t-1} }{\w_{t-1}^\top (\lambda \I - \A) \w_{t-1}}$, and output an approximate solution $\tw_t$ satisfying \\
    $\qquad\qquad f_t (\tw_t) \le \min_{\w} f_t (\w) + \epsilon_t$.
    \vspace*{1ex}
    \STATE Normalization: $\w_{t} \leftarrow \frac{\tw_t}{\norm{\tw_t}}$
    \ENDFOR
    \STATE \textbf{// Estimate the top eigenvalue of $(\lambda_{(s)} \I - \A)^{-1} $} 
    \STATE Optimize the least squares problem with LSO 
    \begin{align*}
      \min_{\uu}\ l_s (\uu) := \frac{1}{2}
      \uu^\top
      (\lambda_{(s)} \I - \A) \uu - 
      \w_{s m_1}^\top \uu
    \end{align*}
    with initialization $\frac{ \w_{s m_1} }{\w_{s m_1}^\top (\lambda \I - \A) \w_{s m_1}}$, and output an approximate solution $\uu_s$ satisfying \\
    $\qquad\qquad l_s (\uu_s) \le \min_{\uu} l_s (\uu) + \te_s$.
    \vspace*{1ex}
    \STATE Update: $\Delta_s \leftarrow \frac{1}{2} \cdot \frac{1}{\w_{s m_1}^\top \uu_s - \underline{\sigma}/8}, \;\;
    \lambda_{(s)} \leftarrow \lambda_{(s-1)} - \frac{\Delta_s}{2}$
    \UNTIL {$\Delta_{(s)} \le \tilde{\Delta}}$
    \STATE $\lambda_{(f)} \leftarrow \lambda_{(s)}$
    \STATE \textbf{// Power method on $( \lambda_{(f)} \I - \A )^{-1}$ in accurate regime}
    \FOR{$t = s m_1 + 1, \dots, s m_1 + m_2$}
    \STATE Optimize the least squares problem with LSO 
    \begin{align*}
      \min_{\w}\ f_t(\w) := \frac{1}{2} \w^\top (\lambda_{(f)} \I - \A) \w - \w_{t-1}^\top \w,
    \end{align*}
    with initialization $\frac{ \w_{t-1} }{\w_{t-1}^\top (\lambda \I - \A) \w_{t-1}}$, and output an approximate solution $\tw_t$ satisfying \\
    $\qquad\qquad  f_t (\tw_t) \le \min_{\w} f_t (\w) + \epsilon_t$.
    \vspace*{1ex}
    \STATE Normalization: $\w_{t} \leftarrow \frac{\tw_t}{\norm{\tw_t}}$
    \ENDFOR
    \ENSURE $\w_{s m_1 + m_2}$ is the approximate eigenvector.
  \end{algorithmic}
\end{algorithm}

We give the inexact shift-and-invert preconditioning power method in Algorithm~\ref{alg:meta-shift-and-invert}, which consists of two phases. In Phase I, starting from an lower estimate of $\tilde{\Delta}$ and an upper bound of $\rho_1$, namely $\lambda_{(0)}=1+\tilde{\Delta}$ (recall that by assumption, $\rho_1 \leq 1$), the \textbf{repeat-until} loop locates an estimate of the eigenvalue $\lambda_{(s)}$ such that $ 0 < \lambda_{(s)} - \rho_1 = \Theta(1) \cdot \Delta$, and it does so by estimating the top eigenvalue $\beta_1$ of $(\lambda \I - \A)^{-1}$ (through a small number of power iterations in the \textbf{for} loop), and shrinking the gap between $\lambda_{(s)}$ and $\rho_1$. In Phase II, the algorithm fixes the shift-and-invert parameter $\lambda_{(s)}$, and runs many power iterations in the last \textbf{for} loop to achieve an accurate estimate of the top eigenvector $\p_1$. 

We now provide convergence analysis of Algorithm~\ref{alg:meta-shift-and-invert} following that of~\citet{GarberHazan15c} closely, but improving their rate (removing an extra $\log \frac{1}{\epsilon}$ factor) with the warm-start strategy for least squares problems by~\citet{Garber_16a}, and making the initial versus final error ratio explicit in the exposition. 
We discuss the least-squares solver in Algorithm~\ref{alg:meta-shift-and-invert} in the next section. 

\textbf{Measure of progress} Since $\{\p_1,\dots,\p_d\}$ form an orthonormal basis of $\bbR^d$, we can write each normalized iterate as a linear combination of the eigenvectors as
\begin{align*}
  \w_t = \sum_{i=1}^d \xi_{ti} \p_i, \quad \text{where}\;  \xi_{ti}= \w_t^\top \p_i , \quad \text{for}\; i=1,\dots,d,
\end{align*}
and $\sum_{i=1}^d \xi_{ti}^2 = 1$. Our goal is to have high alignment between the estimate $\w_t$ and $\p_1$, \ie, $\xi_{t1} = \w_t^\top \p_1 \ge 1 - \epsilon$ for $\epsilon\in (0, 1)$. Equivalently, we would like to have $\rho_1 - \w_t^\top \A \w_t \le  \rho_1 \epsilon$ (see Lemma~\ref{lem:alignment-to-objective} in Appendix~\ref{sec:auxiliary}).

\subsection{Iteration complexity of inexact power method} 
\label{sec:shift-and-invert-inexact-power-method}

Consider the inexact shift-and-invert power iterations: 
$\tw_{t} \approx  \argmin_{\w}\; f_t (\w) = \frac{1}{2} \w^\top (\lambda \I - \A) \w - \w_{t-1}^\top \w$, $\w_{t} \leftarrow \frac{\tw_{t}}{\norm{\tw_{t}}}$, where $f_t (\tw_{t}) \le \min_{\w} f_t (\w) + \epsilon_t$, for $t=1,\dots$. 
We can divide the convergence behavior of this method into two regimes: in the crude regime, our goal is to estimate the top eigenvalue (as in Phase I of Algorithm~\ref{alg:meta-shift-and-invert}) 
regardless of the existence of an eigengap;
whereas in the accurate regime, our goal is to estimate the top eigenvector (as in Phase II of Algorithm~\ref{alg:meta-shift-and-invert}).

Following~\citet{GarberHazan15c,Garber_16a}, we provide convergence guarantees for inexact shift-and-invert power iterations in Lemma~~\ref{lem:iteration-complexity-inexact-power-method} (Appendix~\ref{sec:iteration-complexity-inexact-power-method-appendix}), quantifying the sufficient accuracy $\epsilon_t$ in solving each least squares problem for the overall method to converge. 
Note that the iteration complexity for the crude regime is independent of eigengap, 
while in the accurate regime the rate 
in which the error decreases does depend on the eigengap between 
the first two eigenvalues of $(\lambda \I - \A)^{-1}$.

\subsection{Bounding initial error for each least squares}
\label{sec:shift-and-invert-warm-start}

For each least squares problem $f_t (\w)$ in inexact shift-and-invert power method, one can show that the optimal initialization based on the previous iterate 
is~\citep{Garber_16a}
\begin{align*}
  \w_t^{init} = \frac{ \w_{t-1} }{\w_{t-1}^\top (\lambda \I - \A) \w_{t-1}}. 
\end{align*}
We can bound the suboptimality of this initialization, defined as $\epsilon_t^{init} := f_t (\w_t^{init}) - \min_{\w} f_t (\w)$. See full analysis in Appendix~\ref{sec:shift-and-invert-warm-start-append}. 

\begin{lem}  
  \label{lem:ft_ratio}
  Initialize each least squares problem $\min_{\w}\, f_t (\w)$ in inexact shift-and-invert power method from $\w_t^{init} = \frac{ \w_{t-1} }{\w_{t-1}^\top (\lambda \I - \A) \w_{t-1}}$. Then the initial suboptimality $\epsilon_t^{init}$ in $f_t (\w)$ can be bounded by the necessary final suboptimality $\epsilon_t$ as follows. 
  \begin{itemize}
  \item In the crude regime, 
    \begin{align*} 
      \epsilon_t^{init} \le \frac{64 \beta_1^2}{\epsilon^2 \beta_d^2} \left( \frac{(2 \beta_1 / \beta_d)^{T_1} -1}{(2 \beta_1 / \beta_d)-1} \right)^2 \cdot \epsilon_t
    \end{align*}
where $T_1$ is the total number of iterations used (see precise definition in Lemma~\ref{lem:iteration-complexity-inexact-power-method}). 
  \item In the accurate regime, 
    \begin{align*} 
      \epsilon_t^{init} \le \max (G_0,\, 1) \cdot \frac{16 \beta_1^2}{\left( \beta_1 - \beta_2 \right)^2} \cdot \epsilon_t
    \end{align*}
where $G_0$ is the initial condition for shift-and-invert power iterations (see precise definition in Lemma~\ref{lem:iteration-complexity-inexact-power-method}). 
  \end{itemize}
\end{lem}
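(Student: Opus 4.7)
The plan is to derive a closed-form expression for $\epsilon_t^{init}$, bound it by the alignment error $1-\xi_{t-1,1}^2$ (where $\xi_{t-1,i} = \w_{t-1}^\top \p_i$), and then invoke the prescription for $\epsilon_t$ from Lemma~\ref{lem:iteration-complexity-inexact-power-method} to translate that alignment error into a multiple of $\epsilon_t$. Write $\M := \lambda \I - \A$ and $\alpha_{t-1} := \w_{t-1}^\top \M \w_{t-1}$. Since $\w_{t-1}$ has unit norm, a direct computation gives $f_t(\w_t^*) = -\tfrac{1}{2}\w_{t-1}^\top\M^{-1}\w_{t-1}$ and $f_t(\w_t^{init}) = -\tfrac{1}{2\alpha_{t-1}}$, so
\[
\epsilon_t^{init} \;=\; \tfrac{1}{2}\bigl(\w_{t-1}^\top\M^{-1}\w_{t-1} - 1/\alpha_{t-1}\bigr) \;=\; \tfrac{1}{2}\|\w_t^{init} - \w_t^*\|_{\M}^2 \ge 0.
\]

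Expanding $\w_{t-1} = \sum_i \xi_{t-1,i}\p_i$ in the eigenbasis of $\M^{-1}$ (so that $\M^{-1}\p_i = \beta_i \p_i$ and $\sum_i \xi_{t-1,i}^2 = 1$), clearing the denominator $\alpha_{t-1} = \sum_i \xi_{t-1,i}^2/\beta_i$, and symmetrizing in the dummy indices $i,j$ yields the identity
\[
\epsilon_t^{init} \;=\; \frac{1}{4\alpha_{t-1}}\sum_{i\ne j}\xi_{t-1,i}^2\,\xi_{t-1,j}^2\,\frac{(\beta_i-\beta_j)^2}{\beta_i\beta_j},
\]
which manifestly vanishes when $\w_{t-1}$ is aligned with a single eigenvector. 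Using $1/\alpha_{t-1} \le \beta_1$, $\sum_{i\ne j}\xi_{t-1,i}^2\xi_{t-1,j}^2 \le 2(1-\xi_{t-1,1}^2)$, and bounding the off-diagonal ratios by extremal eigenvalues (retaining the dominant $i=1$ or $j=1$ contributions in the accurate regime, where the other terms are of higher order in $1-\xi_{t-1,1}^2$), we obtain a bound of the form $\epsilon_t^{init} \le c_{\beta}\cdot(1-\xi_{t-1,1}^2)$, where $c_{\beta}$ is explicit in $\beta_1,\beta_2,\beta_d$.

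It remains to convert $1-\xi_{t-1,1}^2$ into $\epsilon_t$. Lemma~\ref{lem:iteration-complexity-inexact-power-method} prescribes how small $\epsilon_t$ must be for the inexact power iterations on $\M^{-1}$ to make progress. In the accurate regime this prescription ties $\epsilon_t$ to the current alignment error through the eigengap of $\M^{-1}$; inverting this tie gives $1-\xi_{t-1,1}^2 \le \max(G_0,1)\cdot\tfrac{16\beta_1^2}{(\beta_1-\beta_2)^2}\cdot\epsilon_t/c_\beta$ (up to the initialization factor $G_0$), and combining with the previous display yields the accurate-regime claim. In the crude regime no eigengap is exploited and the alignment error can be amplified per step by the worst-case factor $2\beta_1/\beta_d$; summing the resulting geometric series over the $T_1$ iterations of Phase~I produces the factor $\bigl(\tfrac{(2\beta_1/\beta_d)^{T_1}-1}{2\beta_1/\beta_d-1}\bigr)^2$, while $\beta_1^2/(\epsilon^2\beta_d^2)$ tracks the misalignment of the randomly seeded $\w_0$.

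The main obstacle is this final step: the prescription for $\epsilon_t$ in Lemma~\ref{lem:iteration-complexity-inexact-power-method} couples several quantities (the initialization factor $G_0$, the desired overall accuracy $\epsilon$, and the $\beta_1/\beta_d$ condition number), and one must invert it and propagate the resulting bound through the inexact-power-method recurrence so that the constants line up with the stated $\max(G_0,1)\cdot\tfrac{16\beta_1^2}{(\beta_1-\beta_2)^2}$ in the accurate regime and the geometric-series expression in the crude regime. The computation is essentially bookkeeping, but the constants are delicate and it is easy to lose a factor of $\beta_1/\beta_d$ or pick up an extra $\log(1/\epsilon)$ if the warm-start identity of Step one is not exploited.
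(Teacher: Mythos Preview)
Your exact identity for $\epsilon_t^{init}$ is correct and in fact sharper than what the paper uses. The problem is the next move: collapsing the double sum to $c_\beta\cdot(1-\xi_{t-1,1}^2)$ forces you to bound $(\beta_i-\beta_j)^2/(\beta_i\beta_j)$ uniformly over all pairs $i,j$, which drags in a factor of $1/\beta_d$ (or $\beta_1/\beta_d^2$) into $c_\beta$. When you then compare to the accurate-regime prescription
\[
\epsilon_t \;=\; \min\Bigl(\textstyle\sum_{i\ge2}\xi_{t-1,i}^2/\beta_i,\ \xi_{t-1,1}^2/\beta_1\Bigr)\cdot\frac{(\beta_1-\beta_2)^2}{32}
\]
from Lemma~\ref{lem:iteration-complexity-inexact-power-method}, you pick up an extra $\beta_2/\beta_d$ in the ratio that is not present in the stated bound. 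No amount of ``inverting'' Lemma~\ref{lem:iteration-complexity-inexact-power-method} removes it, because the $\beta_d$ entered through your own upper bound on $\epsilon_t^{init}$, not through $\epsilon_t$.

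The paper avoids this by keeping $\epsilon_t^{init}$ in the form that already matches $\epsilon_t$. Since $\alpha_t$ minimizes $f_t(\alpha\w_{t-1})$ over all scalars $\alpha$, one has $\epsilon_t^{init}\le f_t(\beta_1\w_{t-1})-f_t^*$, and expanding in the eigenbasis gives
\[
\epsilon_t^{init}\;\le\;\tfrac12\sum_{i=1}^d(\beta_1-\beta_i)^2\,\xi_{t-1,i}^2/\beta_i\;\le\;\tfrac{\beta_1^2}{2}\sum_{i\ge2}\xi_{t-1,i}^2/\beta_i.
\]
The right-hand side is exactly one branch of the $\min$ defining $\epsilon_t$, so the ratio $\epsilon_t^{init}/\epsilon_t\le 16\beta_1^2/(\beta_1-\beta_2)^2$ drops out immediately; the $\max(G_0,1)$ factor handles the other branch via $G(\w_{t-1})\le G_0$. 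In the crude regime the paper does even less work: it discards the warm start altogether and uses the trivial bound $\epsilon_t^{init}\le f_t(\0)-f_t^*\le\beta_1/2$, then simply divides by the crude-regime value of $\epsilon_t$ from Lemma~\ref{lem:iteration-complexity-inexact-power-method}. No propagation of alignment errors through a geometric recursion is needed---that recursion lives entirely inside the proof of Lemma~\ref{lem:iteration-complexity-inexact-power-method}, and here one only quotes its output.
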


\subsection{Iteration complexity of Algorithm~\ref{alg:meta-shift-and-invert}}
\label{sec:shift-and-invert-total-iteration-complexity}

Based on the iteration complexity of inexact power method and the warm-start strategy, we derive the total number of iterations (least squares problems) needed by Algorithm~\ref{alg:meta-shift-and-invert}.

\begin{lem}[Iteration complexity of the \textbf{repeat-until} loop in Algorithm~\ref{alg:meta-shift-and-invert}] \label{lem:shift-and-invert-repeat-until}
  Suppose that $\tilde{\Delta} \in [c_1 \Delta,\ c_2 \Delta]$ where $c_2 \le 1$. Set $m_1 = \ceil{ 8 \log \left( \frac{16}{\xi_{01}^2} \right) }$ where $\xi_{01} = \w_0^\top \p_1$, initialize each least squares problem as in Lemma~\ref{lem:ft_ratio}, and maintain the ratio between initial and final error to be $\frac{\epsilon_t^{init}}{\epsilon_t} = \frac{32 \cdot 10^{2 m_1+1}}{\tilde{\Delta}^{2 m_1}}$ and $\frac{\te_s^{init}}{\te_s} = \frac{1024}{\tilde{\Delta}^2}$ throughout the \textbf{repeat-until} loop. 
  Then for all $s \ge 1$ it holds that 
  \begin{align*}
    \frac{1}{2} (\lambda_{(s-1)} - \rho_1) \le \Delta_s \le \lambda_{(s-1)} - \rho_1. 
  \end{align*}
  Upon exiting this loop, the $\lambda_{(f)}$ satisfies
  \begin{align} \label{e:delta-s-interval}
    \rho_1 + \frac{\tilde{\Delta}}{4} \le \lambda_{(f)} \le \rho_1 + \frac{3 \tilde{\Delta}}{2},
  \end{align}
  and the number of iterations by \textbf{repeat-until} is $\calO \left( \log \frac{1}{\tilde{\Delta}} \right)$.
\end{lem}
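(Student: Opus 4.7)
The plan is to first establish the invariant
\[
\tfrac{1}{2}(\lambda_{(s-1)}-\rho_1)\;\le\;\Delta_s\;\le\;\lambda_{(s-1)}-\rho_1
\]
by induction on $s$, and then to read off the other two conclusions as algebraic consequences. Writing $\beta_1 = 1/(\lambda_{(s-1)}-\rho_1)$ and inverting the update rule $\Delta_s = 1/(2(\w_{sm_1}^\top \uu_s - \underline{\sigma}/8))$, this invariant is equivalent to the two-sided bound
\[
\tfrac{1}{2}\beta_1 + \tfrac{\underline{\sigma}}{8}\;\le\;\w_{sm_1}^\top \uu_s\;\le\;\beta_1 + \tfrac{\underline{\sigma}}{8},
\]
so the inductive step reduces to showing that $\w_{sm_1}^\top \uu_s$ is an accurate two-sided estimator of the top eigenvalue $\beta_1$ of $(\lambda_{(s-1)}\I-\A)^{-1}$, with the cushion $\underline{\sigma}/8$ absorbing the residual errors on each side.

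For the upper bound $\w_{sm_1}^\top\uu_s \le \beta_1 + \underline{\sigma}/8$ (which yields $\Delta_s \ge \tfrac{1}{2}(\lambda_{(s-1)}-\rho_1)$), I would expand $\w_{sm_1}$ in the eigenbasis of $\A$ and use that the exact regression solution $\uu^* = (\lambda_{(s-1)}\I-\A)^{-1}\w_{sm_1}$ already satisfies $\w_{sm_1}^\top \uu^* = \sum_i \beta_i \xi_i^2 \le \beta_1$; the $\te_s$-suboptimality of $\uu_s$ combined with $\lambda_{(s-1)}\I - \A \succeq (\lambda_{(s-1)}-\rho_1)\I$ bounds $\|\uu_s - \uu^*\| \le \sqrt{2\te_s\beta_1}$, so calibrating $\te_s$ keeps $|\w_{sm_1}^\top(\uu_s - \uu^*)|$ within $\underline{\sigma}/8$. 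For the matching lower bound $\w_{sm_1}^\top \uu_s \ge \beta_1/2 + \underline{\sigma}/8$ (which yields $\Delta_s \le \lambda_{(s-1)}-\rho_1$), I would invoke the crude-regime convergence guarantee for inexact power method (Lemma~\ref{lem:iteration-complexity-inexact-power-method}): after the prescribed $m_1 = \ceil{8\log(16/\xi_{01}^2)}$ inner iterations, each warm-started and solved to the tolerance fixed by the ratios $\epsilon_t^{\mathrm{init}}/\epsilon_t = 32\cdot 10^{2m_1+1}/\tilde{\Delta}^{2m_1}$ and $\te_s^{\mathrm{init}}/\te_s = 1024/\tilde{\Delta}^2$ via Lemma~\ref{lem:ft_ratio}, the leading eigenbasis coefficient $\xi_1$ of $\w_{sm_1}$ is close enough to unity that $\w_{sm_1}^\top \uu^* \ge \beta_1 \xi_1^2$ comfortably exceeds $\beta_1/2 + \underline{\sigma}/4$, leaving an extra $\underline{\sigma}/8$ of slack to absorb the regression error.

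Given the invariant, the remaining claims follow arithmetically from $\lambda_{(s)} - \rho_1 = (\lambda_{(s-1)}-\rho_1) - \Delta_s/2$: the upper inequality $\Delta_s \le \lambda_{(s-1)}-\rho_1$ yields $\lambda_{(s)}-\rho_1 \ge \tfrac{1}{2}(\lambda_{(s-1)}-\rho_1) > 0$ (so the shift stays strictly above $\rho_1$), while the lower inequality $\Delta_s \ge \tfrac{1}{2}(\lambda_{(s-1)}-\rho_1)$ yields the geometric contraction $\lambda_{(s)}-\rho_1 \le \tfrac{3}{4}(\lambda_{(s-1)}-\rho_1)$, which starting from $\lambda_{(0)}-\rho_1 \le 1+\tilde{\Delta}$ forces exit within $O(\log(1/\tilde{\Delta}))$ iterations. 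At the exiting step, $\Delta_s \le \tilde{\Delta}$ combined with $\Delta_s \ge \tfrac{1}{2}(\lambda_{(s-1)}-\rho_1)$ forces $\lambda_{(s-1)}-\rho_1 \le 2\tilde{\Delta}$, whence $\lambda_{(f)}-\rho_1 \le \tfrac{3}{4}\cdot 2\tilde{\Delta} = \tfrac{3\tilde{\Delta}}{2}$; and because step $s-1$ did not exit, $\Delta_{s-1} > \tilde{\Delta}$ combined with $\Delta_{s-1} \le \lambda_{(s-2)}-\rho_1$ gives $\lambda_{(s-1)}-\rho_1 \ge \Delta_{s-1}/2 > \tilde{\Delta}/2$, so $\lambda_{(f)}-\rho_1 \ge \tfrac{1}{2}(\lambda_{(s-1)}-\rho_1) > \tilde{\Delta}/4$. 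The main obstacle will be the lower bound on $\w_{sm_1}^\top \uu_s$: one must simultaneously control the alignment $\xi_1$ of $\w_{sm_1}$ with $\p_1$ via the crude-regime guarantee and bound the regression residual of $\uu_s$, while ensuring the cushion $\underline{\sigma}/8$---tuned through $\underline{\sigma} = 1 + \tfrac{1-c_2}{c_2}\tilde{\Delta}$---neither swamps the approximation error nor overshoots uniformly in $s$; this is exactly what the specific choices of $m_1$ and the warm-start ratios in the hypothesis are engineered to deliver.
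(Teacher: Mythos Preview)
Your plan matches the paper's proof closely: establish the two-sided bound on $\Delta_s$ by controlling $\w_{sm_1}^\top\uu_s$ via the crude-regime guarantee plus the regression residual, then read off the geometric contraction and the exit-time bounds. Two small points to tighten. First, the crude-regime part of Lemma~\ref{lem:iteration-complexity-inexact-power-method} is gap-free and does \emph{not} assert that $\xi_1$ is close to one; what it gives directly is the Rayleigh-quotient bound $\w_{sm_1}^\top\M_{\lambda_{(s-1)}}\w_{sm_1}\ge (1-\epsilon)\beta_1$ (the paper applies it with $\epsilon=1/4$), which is exactly $\w_{sm_1}^\top\uu^*\ge \tfrac{3}{4}\beta_1$ --- so your detour through $\beta_1\xi_1^2$ is unnecessary and slightly mis-describes the lemma, though the conclusion you need still follows. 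Second, your lower bound on $\lambda_{(f)}-\rho_1$ presumes a previous non-exiting step $s-1$; when $f=1$ there is none, and the paper handles that case separately using $\lambda_{(0)}-\rho_1\ge\tilde{\Delta}$.
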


\begin{lem}[Iteration complexity of the final \textbf{for} loop in Algorithm~\ref{alg:meta-shift-and-invert}]  \label{lem:shift-and-invert-for-loop}
  Suppose that $\tilde{\Delta} \in [c_1 \Delta, c_2 \Delta]$ where $0< c_1 < c_2 \le 1$. Set $m_2 = \ceil{ \frac{1}{2}  \log_{\frac{9}{7}} \left( \frac{G_0^2}{\epsilon} \right) }$ where $G_0 := \frac{ \sqrt{ \sum_{i=2}^d  (\lambda_{(f)} - \rho_i)\cdot (\w_0^\top \p_i)^2 }}{ \sqrt{ (\lambda_{(f)} - \rho_1) \cdot (\w_0^\top \p_1)^2 }}$, initialize each least squares problem as in Lemma~\ref{lem:ft_ratio}, and maintain the ratio between initial and final error to be $\frac{\epsilon_t^{init}}{\epsilon_t} = 100 \max (G_0,\, 1)$ throughout the final \textbf{for} loop. 
  Then the output of Algorithm~\ref{alg:meta-shift-and-invert} satisfies
  \begin{align*}
    \w_{s m_1 + m_2}^\top \p_1 \ge 1 - \epsilon.
  \end{align*}
\end{lem}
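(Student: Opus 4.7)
The plan is to instantiate the inexact power method machinery of Lemma~\ref{lem:iteration-complexity-inexact-power-method} on the shifted-and-inverted operator $(\lambda_{(f)}\I - \A)^{-1}$ after substituting the warm-start bound of Lemma~\ref{lem:ft_ratio}, and then translate the resulting bound on the objective-type potential back into an alignment statement via Lemma~\ref{lem:alignment-to-objective}.

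First, I would use Lemma~\ref{lem:shift-and-invert-repeat-until} to conclude that $\lambda_{(f)} - \rho_1 \in [\tilde{\Delta}/4,\, 3\tilde{\Delta}/2]$, and hence, using $\tilde{\Delta} \in [c_1\Delta, c_2\Delta]$ with $c_2 \le 1$, that the relative eigengap of $(\lambda_{(f)}\I - \A)^{-1}$ is bounded below by an absolute constant. Concretely, writing $\beta_i = 1/(\lambda_{(f)} - \rho_i)$, the ratio $\beta_2/\beta_1 = (\lambda_{(f)} - \rho_1)/(\lambda_{(f)} - \rho_2)$ is at most a constant strictly less than one (this is where the $7/9$ in the $\log_{9/7}$ comes from, once the interval for $\lambda_{(f)}$ is plugged in). In particular $\beta_1/(\beta_1 - \beta_2)$ is $\calO(1)$, so the exact shift-and-invert power iterations contract the component in the tail eigenspace by a constant factor per step.

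Next I would invoke Lemma~\ref{lem:iteration-complexity-inexact-power-method} in the accurate regime, which gives a one-step recursion of the form ``new potential $\le \gamma \cdot$ old potential $+ C\beta_1^2/(\beta_1-\beta_2)^2 \cdot \epsilon_t$'' for some $\gamma < 1$. The warm-start rule plugs $\epsilon_t^{init}$ for $\w_t^{init} = \w_{t-1}/(\w_{t-1}^\top(\lambda_{(f)}\I - \A)\w_{t-1})$ into Lemma~\ref{lem:ft_ratio}, which in the accurate regime gives $\epsilon_t^{init} \le 16\max(G_0,1)\beta_1^2/(\beta_1-\beta_2)^2 \cdot \epsilon_t$. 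The prescribed ratio $\epsilon_t^{init}/\epsilon_t = 100 \max(G_0,1)$ is calibrated to exactly cancel this pre-factor (after absorbing the constant eigengap bound from Step~1), so the inexactness term in the one-step recursion is dominated by a constant fraction of the contraction on the clean term. Unrolling the recursion $m_2$ times then yields a clean geometric decrease, of the form $G_{m_2}^2 \le (7/9)^{2 m_2} G_0^2$, where $G_t$ is the natural shift-and-invert potential measuring the ratio of mass outside $\p_1$ to mass in $\p_1$.

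Finally, the choice $m_2 = \lceil \tfrac{1}{2} \log_{9/7}(G_0^2/\epsilon) \rceil$ forces $G_{m_2}^2 \le \epsilon$, which by Lemma~\ref{lem:alignment-to-objective} (or equivalently, by the identity $(\w^\top\p_1)^2 = 1/(1+G^2)$) yields the desired alignment $\w_{sm_1+m_2}^\top\p_1 \ge 1 - \epsilon$ after a routine monotonicity argument on the sign of the inner product (inherited from the warm start). The main technical obstacle, and where I would spend the most care, is verifying the constant $100\max(G_0,1)$: one has to track all the $\beta_1^2/(\beta_1-\beta_2)^2$ factors coming from both the inexact-power-iteration lemma and the warm-start lemma, use the Step~1 interval for $\lambda_{(f)}$ to bound them by absolute constants, and check that the composed constant still fits inside the per-step contraction budget so that the noise term does not erode the geometric rate. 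Everything else is essentially bookkeeping on a geometric series.
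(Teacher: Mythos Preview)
Your proposal is correct and matches the paper's approach: use Lemma~\ref{lem:shift-and-invert-repeat-until} to get $\lambda_{(f)} - \rho_1 \le \tfrac{3}{2}\Delta$ (i.e.\ $c\le 3/2$), which yields $\gamma = (4c+3)/(4c+1) \ge 9/7$ and $\beta_1^2/(\beta_1-\beta_2)^2 = (c+1)^2 \le 25/4$, then plug into Lemmas~\ref{lem:iteration-complexity-inexact-power-method} and~\ref{lem:ft_ratio} to justify both the $100\max(G_0,1)$ ratio and the value of $m_2$. The only superfluous step is your final appeal to Lemma~\ref{lem:alignment-to-objective} (which in any case goes the other direction): Lemma~\ref{lem:iteration-complexity-inexact-power-method} in the accurate regime already delivers the alignment conclusion $\w_t^\top \p_1 \ge 1-\epsilon$ directly, so no translation is needed.
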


As shown in the next section, we will be using linearly convergent solvers for the least squares problems, whose runtime depends on $\log \frac{\epsilon_t^{init}}{\epsilon_t}$. 
Lemma~\ref{lem:shift-and-invert-repeat-until} implies that we need to solve $s m_1 = \calO \left( \log \frac{1}{\Delta} \right)$ least squares problems in Phase I, each with $\log \frac{\epsilon_t^{init}}{\epsilon_t}  = \calO \left( \log \frac{1}{\Delta} \right)$. And Lemma~\ref{lem:shift-and-invert-for-loop} implies that we need to solve $m_2 = \calO \left(  \log \frac{1}{\epsilon} \right)$ least squares problems in Phase II, each with $\log \frac{\epsilon_t^{init}}{\epsilon_t} = \calO \left( 1 \right)$. 
\section{Coordinate descent for least squares}
\label{sec:coord-descent}

Different from PCA, for general eigenvalue problems, the matrix $\A$ may not have the structure of data covariance. As a result, fast stochastic gradient methods for finite-sums (such as SVRG~\citealp{JohnsonZhang13a} used in~\citealt{Garber_16a}) does not apply. However, we can instead apply efficient coordinate descent (CD) methods for convex problems in our setting. In this section, we review the CD methods and study their complexity for solving the least squares problems created by Algorithm~\ref{alg:meta-shift-and-invert}.

There is a rich literature on CD methods and it has attracted resurgent interests recently due to its simplicity and efficiency for big data problems; we refer the readers to~\citet{Wright15a} for a comprehensive survey. 
In each update of CD, we pick one coordinate and take a step along the direction of negative coordinate-wise gradient. 
To state the convergence properties of CD methods, we need the definitions of a few key quantities. Recall that we would like to solve the least squares problems of the form
\begin{align*}
  \min_{\x}\; f (\x) = \frac{1}{2} \x^\top (\lambda \I - \A) \x - \y^\top \x.
\end{align*}
with the optimal solution $\x^* = (\lambda \I - \A)^{-1} \y$. 

\paragraph{Smoothness}  The gradient $\nabla f (\x) = (\lambda \I - \A) \x - \y$ is coordinate-wise Lipschitz continuous: for all $i=1,\dots,d$, $\x \in \bbR^d$, and $\alpha \in \bbR$, we have 
\begin{align*}
  \abs{ \nabla_i f (\x + \alpha \e_i) - \nabla_i f (\x) } \le L_i \abs{\alpha} \quad \text{for} \; L_i :=  \lambda - \A[ii]
\end{align*}
where $\e_i$ is the $i$-th standard basis. Note that for least squares problems, the coordinate-wise gradient Lipschitz constanst are the diagonal entries of its Hessian. Denote by $L_{\max}:=\max_{1\le i \le d}\; L_i$ and $\bar{L} = \frac{1}{d} \sum_{i=1}^d L_i$ the largest and average Lipschitz constant over all coordinates respectively. These two Lipschitz constants are to be distinguished from the ``global'' smoothness constant $\tilde{L}$, which satisfies
\begin{align*}
\norm{ \nabla f(\x) -  \nabla f(\y) } \le \tilde{L} \cdot \norm{\x - \y},\quad\forall \x, \y. 
\end{align*}
Since $f(\x)$ is quadratic,  $\tilde{L}=\sigma_{\max} \left( \lambda \I - \A \right)$. Observe that~\citep{Wright15a}
\begin{align} \label{e:relation-between-smoothness}
1 \le \tilde{L} / L_{\max} \le d,
\end{align}
with upper bound achieved by a Hessian matrix $\1 \1^\top$.

\paragraph{Strong-convexity} As mentioned before, the matrices $(\lambda \I - \A )$ in our least squares problems are always positive definite, with smallest eigenvalue $\mu := \lambda - \rho_1 = \calO \left( \Delta \right)$. As a result, $f(\x)$ is $\mu$-strongly convex with respect to the $\ell_2$ norm. Another strong convexity parameters we will need is $\mu_L$ defined with respect to the norm $\norm{\z}_L = \sum_{i=1}^d \sqrt{L_i} \abs{\z[i]}$. 
It is shown by~\citet{Nutini_15a} that $\mu_L \ge \mu / (d \bar{L})$.

We collect relevant parameters for CD methods in Table~\ref{t:CD-notation}.
\begin{table*}[t]
\centering
\caption{Notation quick reference for CD methods.}
\label{t:CD-notation}
\begin{tabular}{|c|c|c|}\hline
Notation & Description & Relevant properties for our least squares problems \\ \hline
$L_i$ & smoothness parameter for $i$-th coordinate & $L_i :=  \lambda - \A[ii]$ \\
$\bar{L}$ & average smoothness parameter of all coordinates & $\bar{L} = \lambda - \frac{1}{d} \sum_{i=1}^d \A[ii] = \lambda - \frac{1}{d} \sum_{i=1}^d \rho_d$ \\
$L_{\max}$ & largest smoothness parameter of all coordinates & $L_{\max}:=\max_{1\le i \le d}\; L_i$ \\
$\tilde{L}$ & global smoothness parameter & $\tilde{L} = \lambda - \rho_d, \qquad \bar{L} \le L_{\max} \le  \tilde{L} \le d L_{\max}$ \\ \hline
$\mu$ & strong-convexity parameter in $\ell_2$-norm & $\mu=\lambda - \rho_1$ \\
$\mu_L$ & strong-convexity parameter in $\norm{\cdot}_L$-norm & $\mu_L \ge \mu / (d \bar{L})$ \\
\hline
\end{tabular}
\end{table*}

\subsection{Coordinate selection}
\label{sec:coord-descent-selection-rule}

The choice of coordinate to update is a central research topic for CD methods. 
We now discuss several coordinate selection rules, along with convergence rates, that are most relevant in our setting.

\textbf{Gauss-Southwell-Lipschitz (GSL,~\citealp{Nutini_15a})}: A greedy rule for selecting the coordinate where the coordinate with largest gradient magnitude (relative to the square root of Lipschitz constant) is chosen. The greedy rule tends to work well for high dimensional sparse problems~\citep{Dhillon_11a}.

\textbf{Cyclic~\citep{BeckTetruas03a}}: Given an ordering of the coordinates, cyclic coordinate descent processes each coordinate exactly once according to the ordering in each pass. In the extension ``essentially cyclic'' rule, each coordinate is guaranteed to be chosen at least once in every $\tau \ge d$ updates. This rule is particularly useful when the data can not fit into memory, as we can iteratively load a fraction of $\A$ and update the corresponding coordinates, and still enjoy convergence guarantee.

\textbf{Accelerated randomized coordinate descent methods (ACDM)}: In each step, a coordinate is selected randomly, according to certain (possibly non-uniform) distribution based on the coordinate-wise Lipschitz constant. In accelerated versions of randomized CD~\citep{Nester12a,LeeSidfor13a, LuXiao15a, Allen-Zhu16a}, one can maintain auxiliary sequences of iterates to better approximate the objective function and obtain faster convergence rate, at the cost of (slightly) more involved algorithm. We implement the variant NU-ACDM by~\cite{Allen-Zhu16a}, which samples each coordinate $i$ with probability proportional to $\sqrt{L_i}$ and has the fastest convergence rate to date.

\begin{algorithm}[t]
  \caption{Coordinate descent for solving $f(\x) = \frac{1}{2} \x^\top (\I - \A) \x - \y^\top \x$.}
  \label{alg:gsl}
  \renewcommand{\algorithmicrequire}{\textbf{Input:}}
  \renewcommand{\algorithmicensure}{\textbf{Output:}}
  \begin{algorithmic}
    \REQUIRE Data $\A \in \bbR^{d \times d}$, $\b \in \bbR^{d}$, initialization $\x_0$.
    \STATE Compute gradient $\g \leftarrow \lambda \x - \A \x - \y$
    \FOR{$t=1,2,\dots,\tau$}
    \STATE Select a coordinate using one of the rules
    \begin{align*}
      \text{GSL:}& \qquad j \leftarrow \argmax_{1\le i\le d}\; \frac{\abs{\g[i]}}{\sqrt{\lambda - \A[ii]}} \\
      \text{Cyclic:}& \qquad j \leftarrow \mod(t,d)+1  \\
      \text{Random:}& \qquad j \leftarrow \text{random index} \; \in [1,d] 
    \end{align*}
    \STATE Compute update: 
    \hspace*{1em} $\delta \leftarrow - \frac{\g[j]}{\lambda - \A[jj]}$
    \STATE Update coordinate:\hspace*{1em} $\x_{t} \leftarrow \x_{t-1} + \delta \cdot \e_j$
    \STATE Update gradient:
    \begin{align*}
      \g \leftarrow \g - \delta \cdot \A[:j],\qquad \g[j] \leftarrow \g[j] + \lambda \delta
    \end{align*}
    \ENDFOR  
    \ENSURE $\x_{\tau}$ is the approximate solution.
  \end{algorithmic}
\end{algorithm}

We provide the pseudocode of coordinate descent of the above rules in Algorithm~\ref{alg:gsl}.\footnote{The pseudo code for acclerated randomized CD is more involved and we refer the readers to~\cite{Allen-Zhu16a}.}  Note that the stepsize for the chosen coordinate $j$ is $\frac{1}{L_j}$, the inverse Lipschitz constant; for least squares problems where $f(\x)$ is quadratic in each dimension, this stepsize exactly minimizes the function over $\x[j]$ given the rest coordinates. In some sense, the GSL rule is the ``optimal myopic coordinate update''~\citep{Nutini_15a}.

\paragraph{Connection to the greedy selection rule of~\citet{Lei_16a}} Given the current estimate $\x$, the coordinate-wise power method of~\citet{Lei_16a} selects the following coordinate to be updated
\begin{align*}
\argmax_{i}\; \abs{\c[i]} \quad \text{where}\; \c = \frac{\A \x}{\x^\top \A \x} - \x,
\end{align*}
\ie, the coordinate that would be updated the most if a full power iteration were performed. Their selection rule is equivalent to choosing the largest element of $((\x^\top \A \x) \I - \A) \x$, where $\x^\top \A \x$ is the current (lower) estimate of the eigenvalue $\rho_1$.  On the other hand, the greedy Gauss-Southwell rule for our method chooses the largest element of the gradient $(\lambda \I - \A) \x - \y$, where $\y$ is an earlier estimate of the eigenvector and $\lambda$ is an upper estimate of $\rho_1$.

\subsection{Convergence properties}
\label{sec:coord-descent-convergence-rate}

We quote the convergence of CD from the literature.
\begin{lem} [Iteration complexity of CD methods] \label{lem:CD}
The numbers of coordinate updates for Algorithm~\ref{alg:gsl} to achieve $f (\x_\tau) - f_*  \le \epsilon^\prime$ where $f_* = \min_{\x} f(\x)$, using different coordinate selection rules, are
  \begin{gather*}
\text{GSL:}\quad \calO \left(\frac{1}{\mu_L} \cdot \log \frac{f(\x_0) - f_*}{\epsilon^\prime} \right), \\ 
\text{Cyclic:}\quad \calO \left(\frac{d L_{\max} (1 + d \tilde{L}^2 / L_{\max}^2)}{\mu} \cdot \log \frac{f(\x_0) - f_*}{\epsilon^\prime} \right), \\
\text{NU-ACDM:}\quad \calO \left(\frac{\sum_{i=1}^d \sqrt{L_i}}{\sqrt{\mu}} \cdot \log \frac{f(\x_0) - f_*}{\epsilon^\prime} \right).
  \end{gather*}
\end{lem}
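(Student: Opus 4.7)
The plan is to invoke three existing convergence results, one per selection rule, after verifying that our least-squares objective $f(\x) = \tfrac{1}{2}\x^\top(\lambda \I - \A)\x - \y^\top \x$ satisfies the hypotheses of each. The key reusable facts, already collected in Table~\ref{t:CD-notation}, are: (a) $f$ is a quadratic whose Hessian is $\lambda\I - \A \succ 0$, so $f$ is $\mu$-strongly convex in $\ell_2$ with $\mu = \lambda - \rho_1$; (b) each coordinate is Lipschitz-smooth with $L_i = \lambda - \A[ii]$ (just the $i$th diagonal of the Hessian); and (c) the global smoothness constant $\tilde L = \lambda - \rho_d$ satisfies $L_{\max} \le \tilde L \le d L_{\max}$. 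All three rules take the same ``coordinate minimization'' step $\delta = -\g[j]/L_j$, which on a quadratic exactly minimizes $f$ along $\e_j$, so the standard per-iteration descent lemma $f(\x_{t+1}) - f(\x_t) \le -\tfrac{1}{2 L_j}\g[j]^2$ holds without approximation.

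First, for GSL, I would cite Theorem~3 of Nutini et al.\ (2015). Since $j = \argmax_i |\g[i]|/\sqrt{L_i}$, the descent lemma combined with strong convexity in the weighted norm $\|\cdot\|_L$ yields the one-step contraction $f(\x_{t+1}) - f_* \le (1-\mu_L)(f(\x_t) - f_*)$, where $\mu_L$ is the strong-convexity constant of $f$ with respect to $\|\cdot\|_L$ (and $\mu_L \ge \mu/(d\bar L)$ as noted in the text). Unrolling this contraction and taking logarithms gives the $\mathcal{O}((1/\mu_L)\log((f(\x_0)-f_*)/\epsilon'))$ bound.

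Second, for the cyclic rule, I would cite Theorem~3.9 of Beck and Tetruashvili (2013), which for a smooth strongly-convex quadratic gives a per-epoch (i.e.\ per $d$ updates) contraction of the form $1 - \mu/[L_{\max}(1 + d\tilde L^2/L_{\max}^2)]$. Converting the number of epochs to the number of coordinate updates produces the claimed $\mathcal{O}(d L_{\max}(1+d\tilde L^2/L_{\max}^2)/\mu)$ rate up to the $\log$ factor. The only verification needed is that our quadratic fits their setting, which is immediate.

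Third, for NU-ACDM, I would cite the main theorem of Allen-Zhu, Qu, Richt\'arik, Yuan (2016), which establishes that sampling coordinate $i$ with probability proportional to $\sqrt{L_i}$ and using the accelerated momentum scheme of that paper yields an expected contraction of $1 - \sqrt{\mu}/\sum_i \sqrt{L_i}$ per iteration on any $\mu$-strongly-convex, coordinate-smooth objective; applied to our quadratic this directly yields the $\mathcal{O}((\sum_i\sqrt{L_i}/\sqrt{\mu})\log((f(\x_0)-f_*)/\epsilon'))$ bound. There is no obstacle beyond citation since our objective trivially satisfies the required coordinate smoothness and global strong convexity.

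The only real ``work'' in this proof is the bookkeeping that matches symbols between our setup and the cited papers; the main conceptual hurdle is to confirm that in each case the relevant strong-convexity constant ($\mu$ in $\ell_2$ for cyclic and NU-ACDM; $\mu_L$ in $\|\cdot\|_L$ for GSL) is exactly the one appearing in the quoted rate, which Table~\ref{t:CD-notation} and the Nutini et al.\ inequality $\mu_L \ge \mu/(d\bar L)$ already make explicit.
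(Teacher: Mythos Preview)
Your proposal is correct and matches the paper's approach exactly: the paper does not prove Lemma~\ref{lem:CD} at all but simply states ``We quote the convergence of CD from the literature,'' citing \citet{Nutini_15a} for GSL, \citet{BeckTetruas03a} for cyclic, and \citet{Allen-Zhu16a} for NU-ACDM. Your plan to invoke these three results after verifying the (trivially satisfied) smoothness and strong-convexity hypotheses is precisely what is intended, and if anything is more explicit than the paper itself.
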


%
%

  Lemma~\ref{lem:CD} implies that the coordinate descent methods converges linearly for our strongly convex objective: the suboptimality decreases at different geometric rates for each method. Most remarkable is the time complexity of ACDM which has dependence on $\frac{1}{\sqrt{\mu}}$: as we mentioned earlier, the strong convexity parameter $\mu$ for our least squares problems (in the accurate regime) are of the order $\Delta$, thus instantiating the shift-and-invert power method with ACDM leads to a total time complexity of which depends on $\frac{1}{\sqrt{\Delta}}$. In comparison, the total time complexity of standard power method depends on $\frac{1}{\Delta}$). Thus we expect our algorithm to be much faster when the eigengap is small.

It is also illuminating to compare ACDM with full gradient descent methods. The iteration complexity to achieve the same accuracy is $\calO \left( \frac{\sqrt{\tilde{L}}}{\sqrt{\mu}} \right) $ for accelerated gradient descent (AGD,~\citealp{Nester04a}). However, each full gradient calculation cost $\calO (d^2)$ (not taking into account the sparsity) and thus the total time complexity for AGD is $\calO \left( \frac{d^2 \sqrt{\tilde{L}}}{\sqrt{\mu}} \right)$. In contrast, each update of ACDM costs $\calO (d)$, giving a total time complexity of $\calO \left( \frac{d \sum_{i=1}^d \sqrt{L_i}}{\sqrt{\mu}} \right)$. In view of~\eqref{e:relation-between-smoothness}, we have 
\begin{align*}
\frac{d \sum_{i=1}^d \sqrt{L_i}}{\sqrt{\mu}} \le 
\frac{d^2  \sqrt{L_{\max}}}{\sqrt{\mu}} \le 
\frac{d^2 \sqrt{\tilde{L}}}{\sqrt{\mu}},
\end{align*}
and thus ACDM is typically more efficient than AGD (and in the extreme case of all but one $L_i$ being $0$, the rate of ACDM is $d$ times better). 
It is also observed empirically that ACDM outperforms AGD and the celebrated conjugate gradient method for solving least squares problems~\citep{LeeSidfor13a,Allen-Zhu16a}. 

Although the convergence rates of GSL and cyclic rules are inferior than that of ACDM, we often observe competitive empirical performance from them. They are easier to implement and can be the method of choice in certain scenariors (\eg, cyclic coordinate descent may be employed when the data can not fit in memory). Finally, we remark that our general scheme and conclusion carries over to the case of block and parallel coordinate descent methods~\citep{Bradley_11a,RichtarTakac15a}, where more than one coordinate are updated in each step.

\section{Putting it all together} 

Our proposed approach consists of instantiating Algorithm 1 with the  different least squares optimizers (LSO) presented in Algorithm 2. Our analysis of the total time complexity thus combines the iteration complexity of shift-and-invert and the time complexity of the CD methods. First, by Lemma~\ref{lem:shift-and-invert-repeat-until} the time complexity of Phase I is not dominant because it is independent of the final error $\epsilon$. Second, by Lemma~\ref{lem:shift-and-invert-for-loop} we need to solve $\log \frac{d}{\epsilon}$ subproblem, each with constant ratio between initial and final suboptimality. Third, according to Lemma~\ref{lem:CD} and noting $\lambda = \rho_1 + \calO(1) \cdot \Delta$, the number of coordinate updates for solving each subproblem is $\calO \left( \frac{1}{\mu_L} \right) = \calO \left( \frac{d (\rho_1 - \frac{1}{d} \sum_{i=1}^d \rho_d)}{\Delta}\right)$ for GSL, and $\calO \left( \frac{\sum_{i=1}^d \sqrt{L_i}}{\sqrt{\mu}} \right) = \calO \left(\frac{ \frac{1}{d} \sum_{i=1}^d \sqrt{ \rho_1 -  \A_{ii} }}{\sqrt{\Delta}} \right)$ for ACDM. 
Since each coordinate update costs $\calO (d)$ time, we obtain the time complexity of our algorithms given in Table~\ref{t:complexity}.

\section{Extensions}
\label{sec:algorithm-extension}

Our algorithms can be easily extended to several other settings. Although our analysis have assumed that $\A$ is positive semidefinite, given an estimate of the maximum eigenvalue, the shift-and-invert algorithm can always convert the problem into a series of convex least squares problems, regardless of whether $\A$ is positive semidefinite, and so CD methods can be applied with convergence guarantee. To extract the tailing eigenvector of $\A$, one can equivalently extract the top eigenvector of $-\A$. Futhermore, extracting the top singular vectors of a non-symmetric matrix $\A$ is equivalent to extracting the top eigenvector of $\left[ \begin{array}{cc} \0 & \A^\top \\ \A & \0 \end{array} \right]$.

To extend our algorithms to extracting multiple top eigenvectors setting, we can use the ``peeling'' procedure: we iteratively extract one eigenvector at a time, removing the already extracted component from the data matrix before extracting the new direction. For example, to extract the second eigenvector $\p_2$, we can equivalently extract the top eigenvector of $\A^\prime=(\I - \p_1 \p_1^\top) \A (\I - \p_1 \p_1^\top)$. And note that we do not need to explicitly compute and store $\A^\prime$ which may be less sparse than $\A$; all we need in CD methods are columns of $\A^\prime$ which can be evaluated by $\calO (d)$ vector operations, \eg, $\A^\prime[:j]= (\I - \p_1 \p_1^\top) \cdot \left(\A[:,j] - \p_1[j] \cdot (\A \p_1) \right)$ by storing  $\A \p_1 \in \bbR^d$. We refer the readers to~\citet{Allen-ZhuLi16a} for a careful analysis of the accuracy in solving each direction and the runtime.


\section{Experiments}
\label{sec:expts}

We now present experimental results to demonstrate the efficiency of our algorithms, denoted as SI (shift-and-invert) + least squares solvers. Besides CD methods, we include AGD as a solver since SI+AGD gives the same time complexity as Lanczos.
We compare our algorithms with the coordinate-wise power method (CPM) and SGCD of~\citet{Lei_16a}, and also the standard power method.

\subsection{Synthetic datasets}
\label{sec:expts-synthetic}

We first generate synthetic datasets to validate the fast convergence of our algorithms. Our test matrix has the form $\A = \U \SS \U^{\top}$, where $\U$ is a random rotation matrix and $\SS = {\rm diag}(1,1-\Delta,1-2\Delta,\ldots)$. The parameter $\Delta$ (which is also the eigengap of $\A$) controls the decay of the spectrum of $\A$. We use $4$ passes of coordinate updates (each pass has $d$ coordinate updates) for SI-Cyclic, SI-GSL and SI-ACDM, and $4$ accelerated gradient updates for SI-AGD, to approximately solve the least squares problems~\eqref{e:lsq}, and set $\tilde{\Delta} = 0.0001$ in Algorithm 1.

We test several settings of dimension $d$ and $\delta$, and the results are summarized in Figure~\ref{fig:simulated_experiments}. We observe that in most cases, CPM/SGCD indeed converge faster than the standard power method, justifying the intuition that some coodinates are more important than others. Furthermore, our algorithm significantly improve over CPM/SGCD for accurate estimate of the top eigenvector, especially when the eigengap $\Delta$ is small, validating our convergence analysis which has improved dependence on the gap.

\begin{figure*}[t]
\centering
\psfrag{Objective Suboptimality}[][]{Suboptimality}
\psfrag{Number of passes over data}[t][]{\# coordinate passes}
\begin{tabular}{@{}c@{\hspace{0.02\linewidth}}c@{\hspace{0\linewidth}}c@{\hspace{0\linewidth}}c@{\hspace{0\linewidth}}c@{}}
& $\Delta=5\times 10^{-3}$ 
& $\Delta=5\times 10^{-4}$
& $\Delta=5\times 10^{-5}$
& $\Delta=5\times 10^{-6}$ \\[-.5ex]
\rotatebox{90}{\hspace{2.5em}$d=500$} & 
\includegraphics[width=0.24\linewidth]{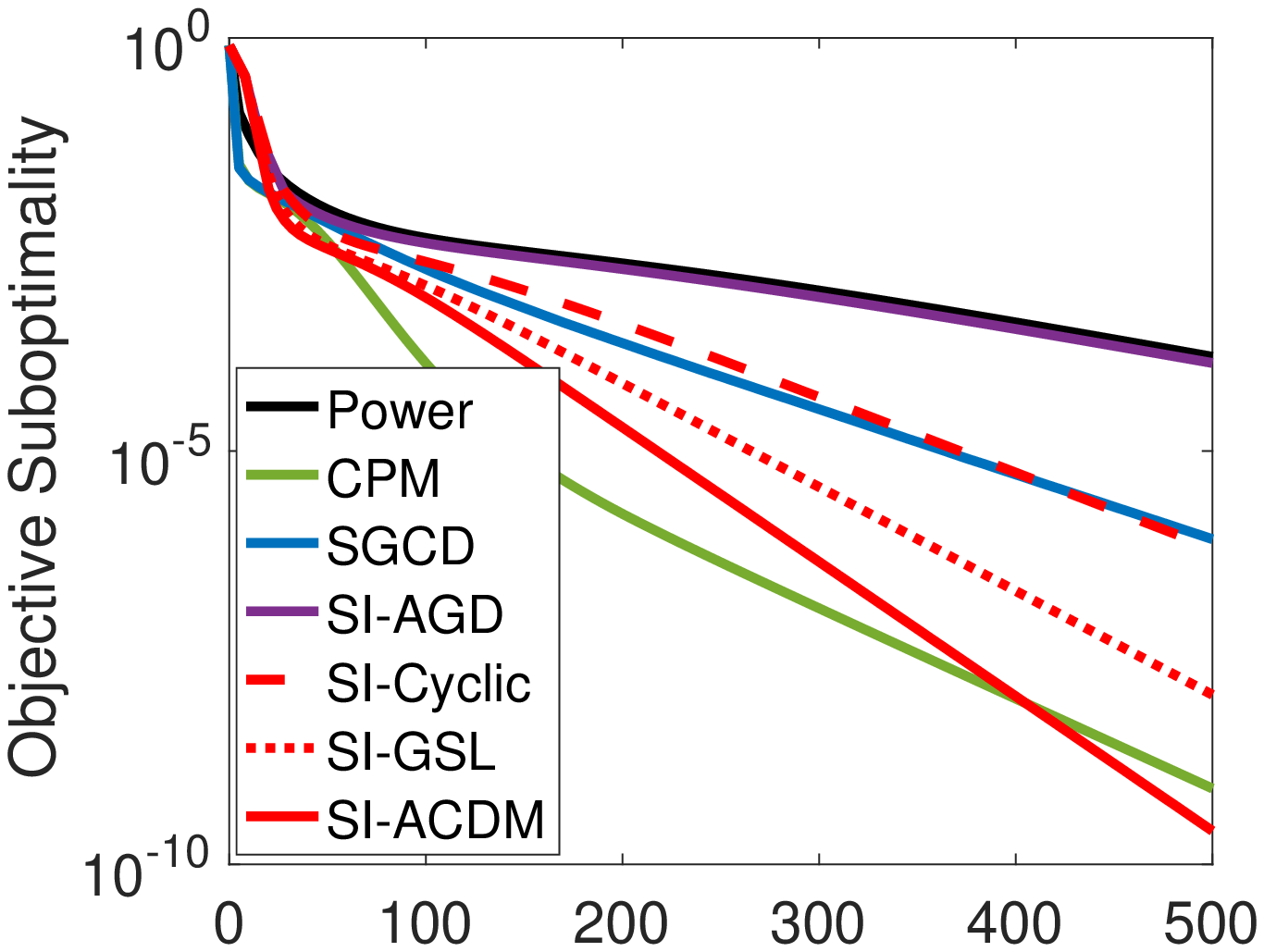}&
\includegraphics[width=0.24\linewidth]{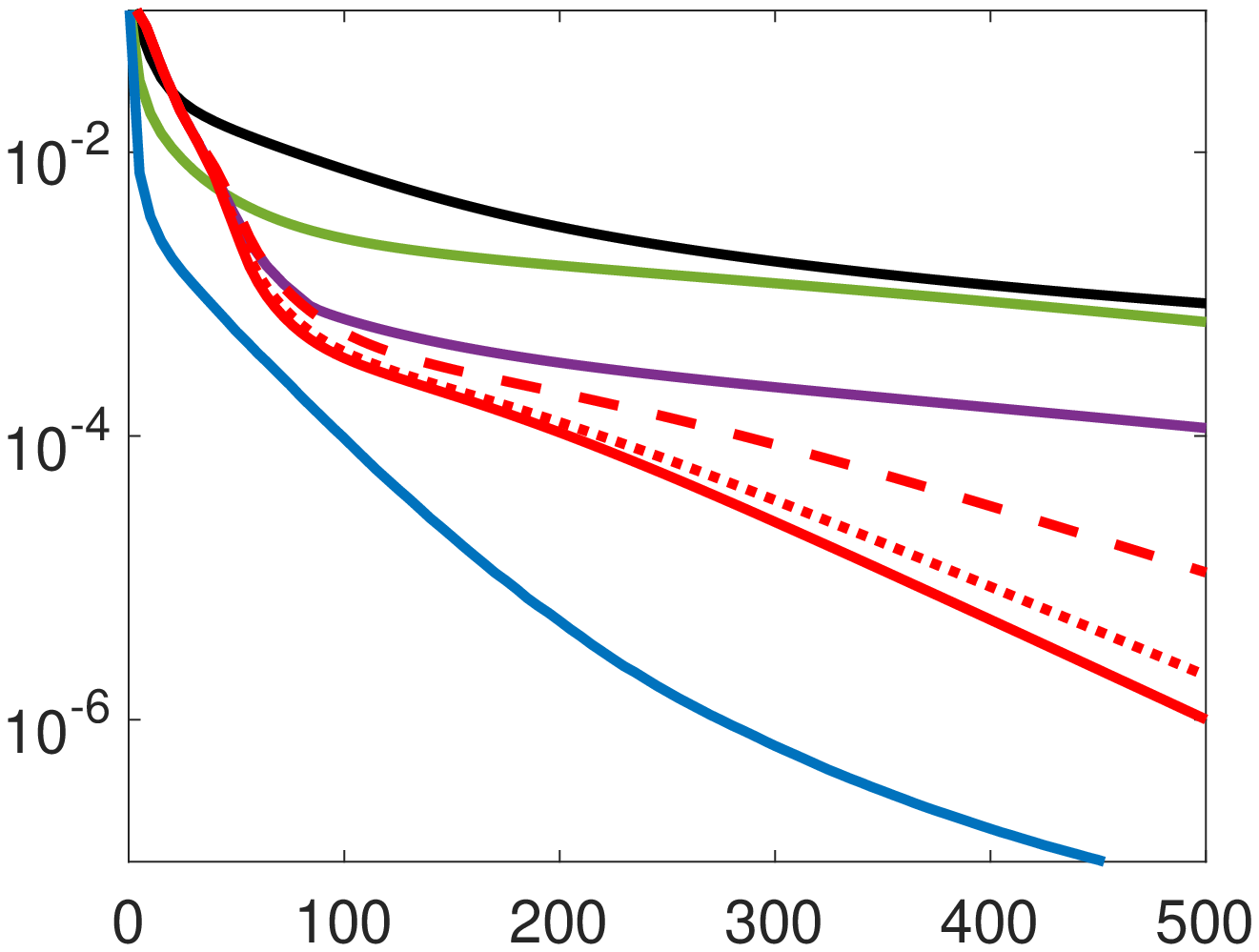}&
\includegraphics[width=0.24\linewidth]{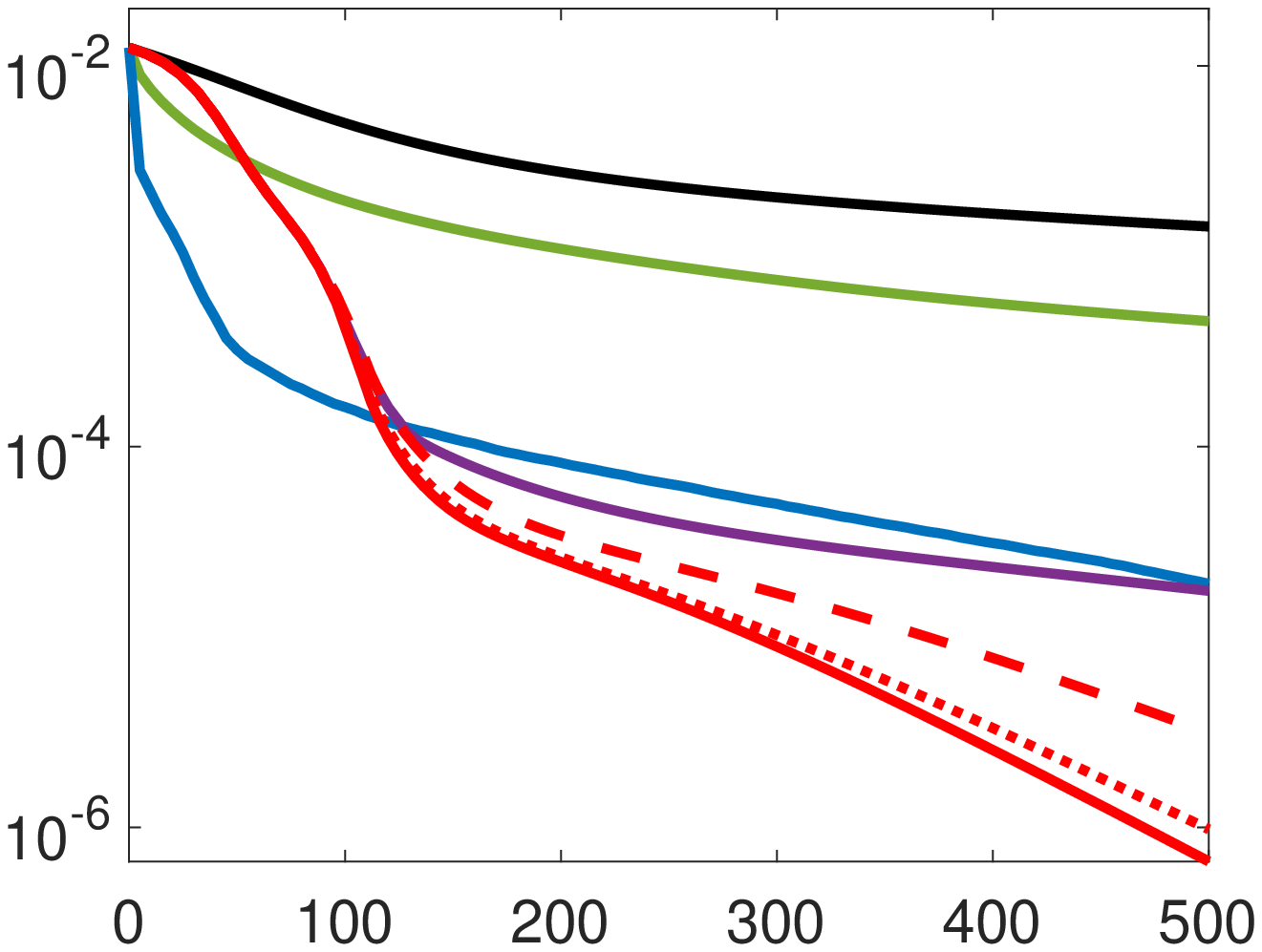}&
\includegraphics[width=0.24\linewidth]{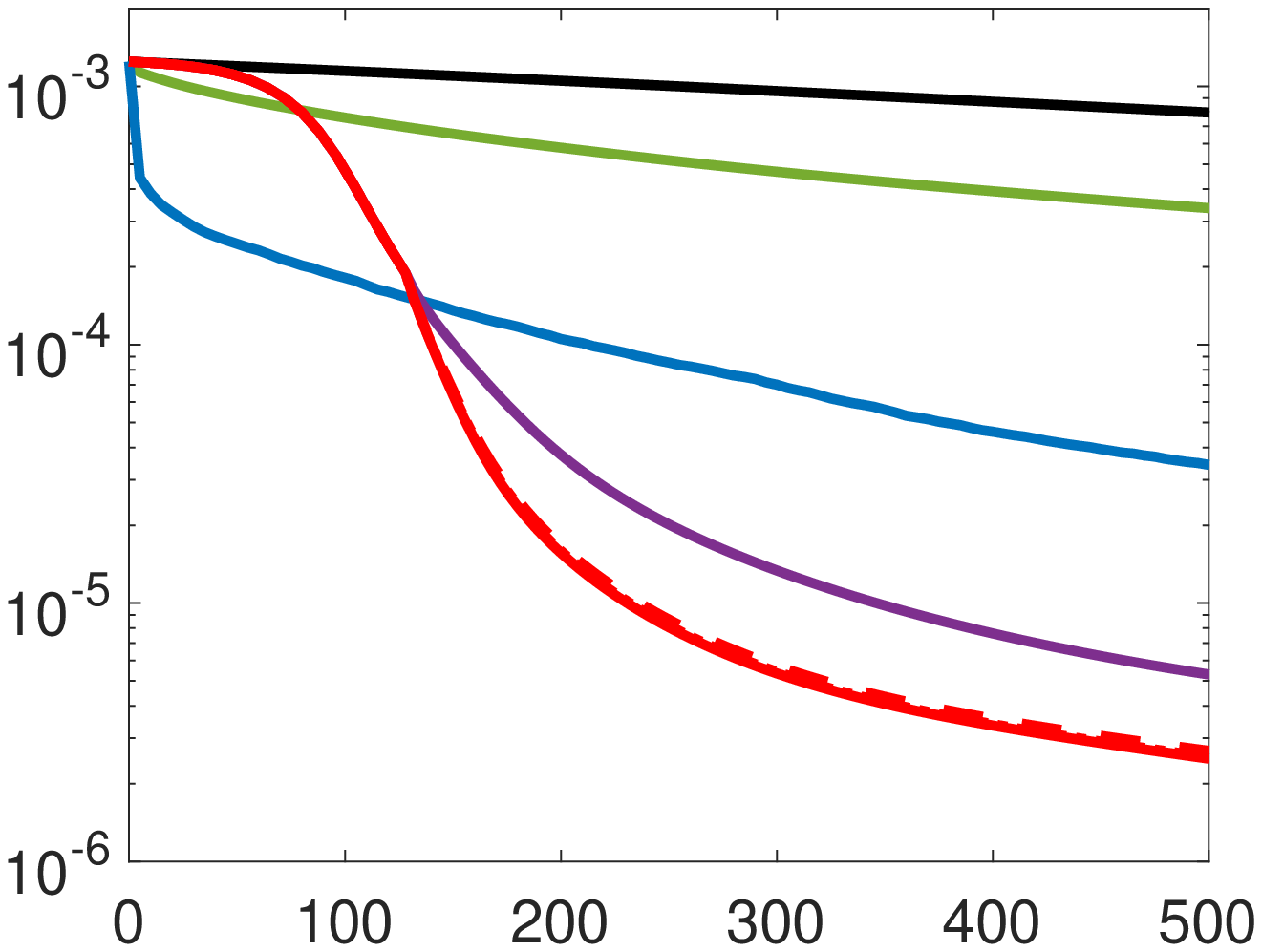}\\[-1ex]
\rotatebox{90}{\hspace{2.5em}$d=2000$} & 
\includegraphics[width=0.24\linewidth]{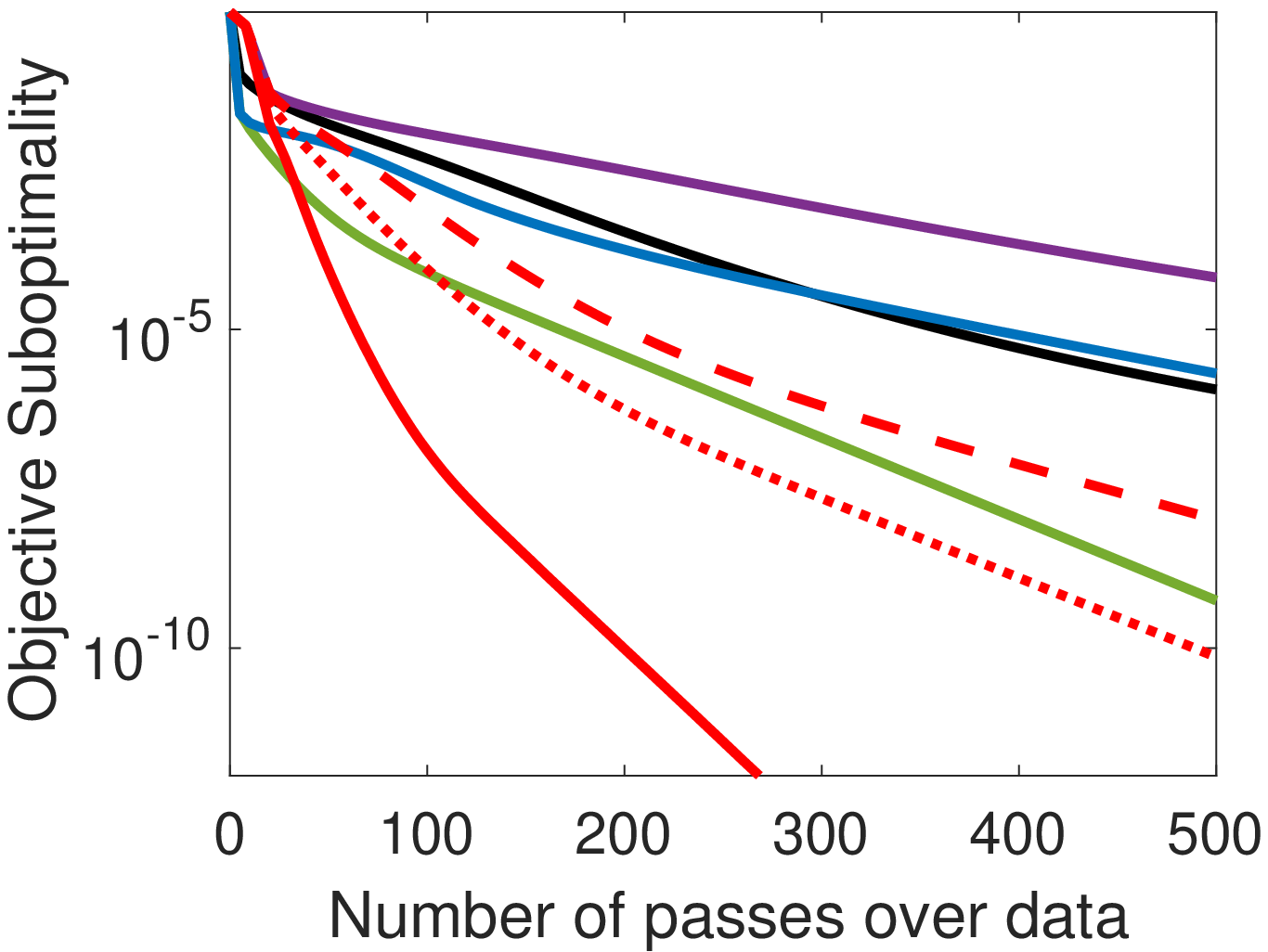}&
\includegraphics[width=0.24\linewidth]{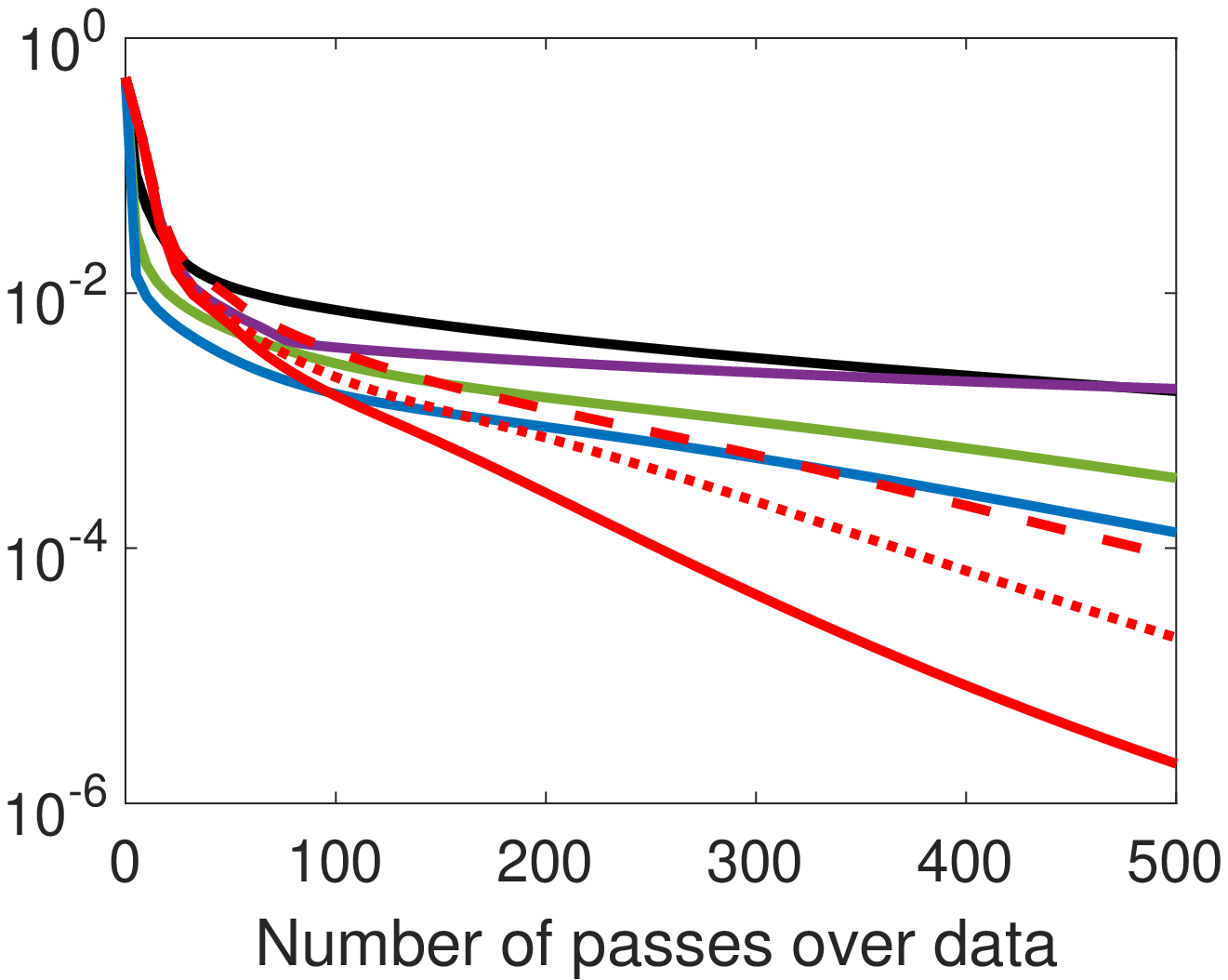}&
\includegraphics[width=0.24\linewidth]{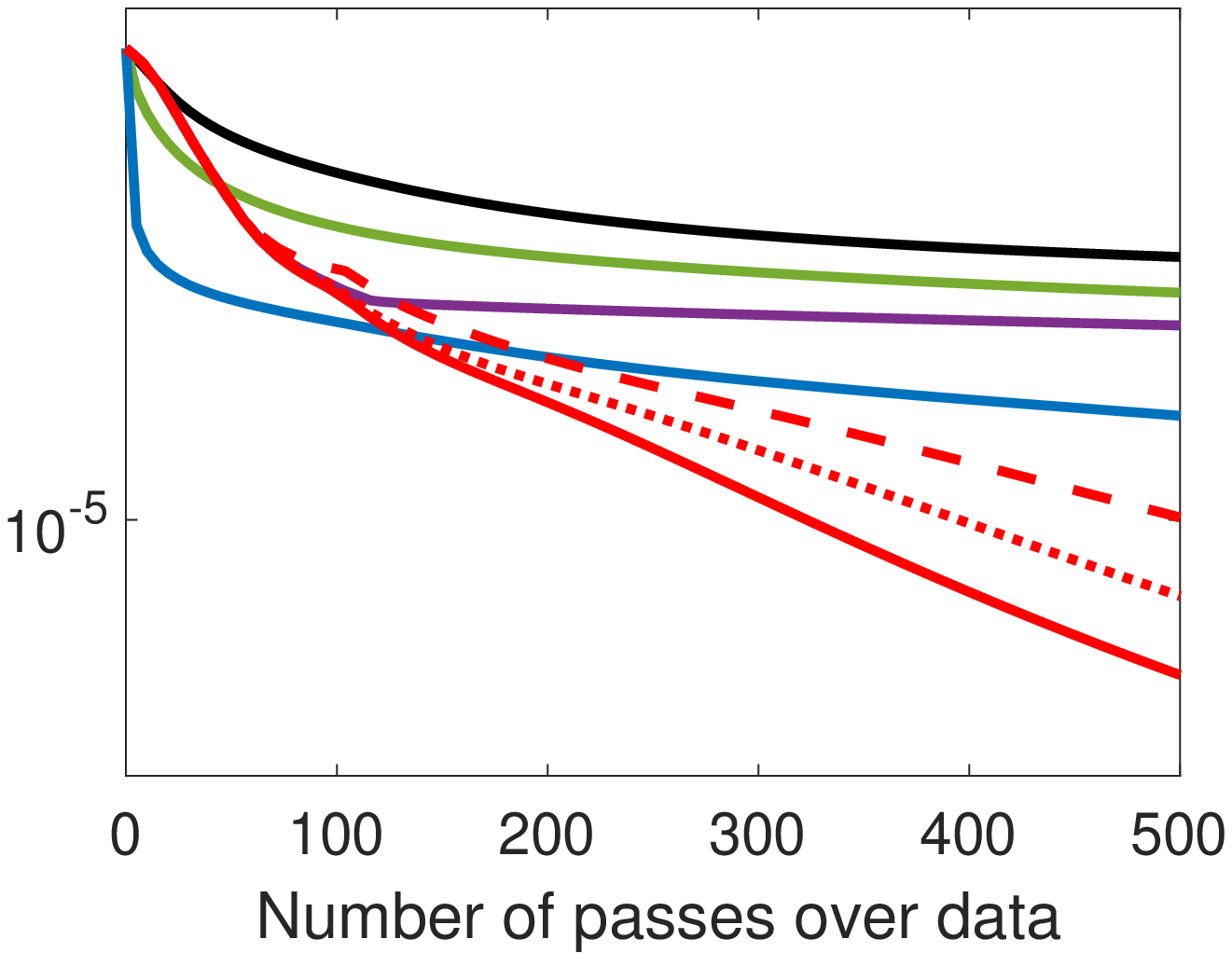}&
\includegraphics[width=0.24\linewidth]{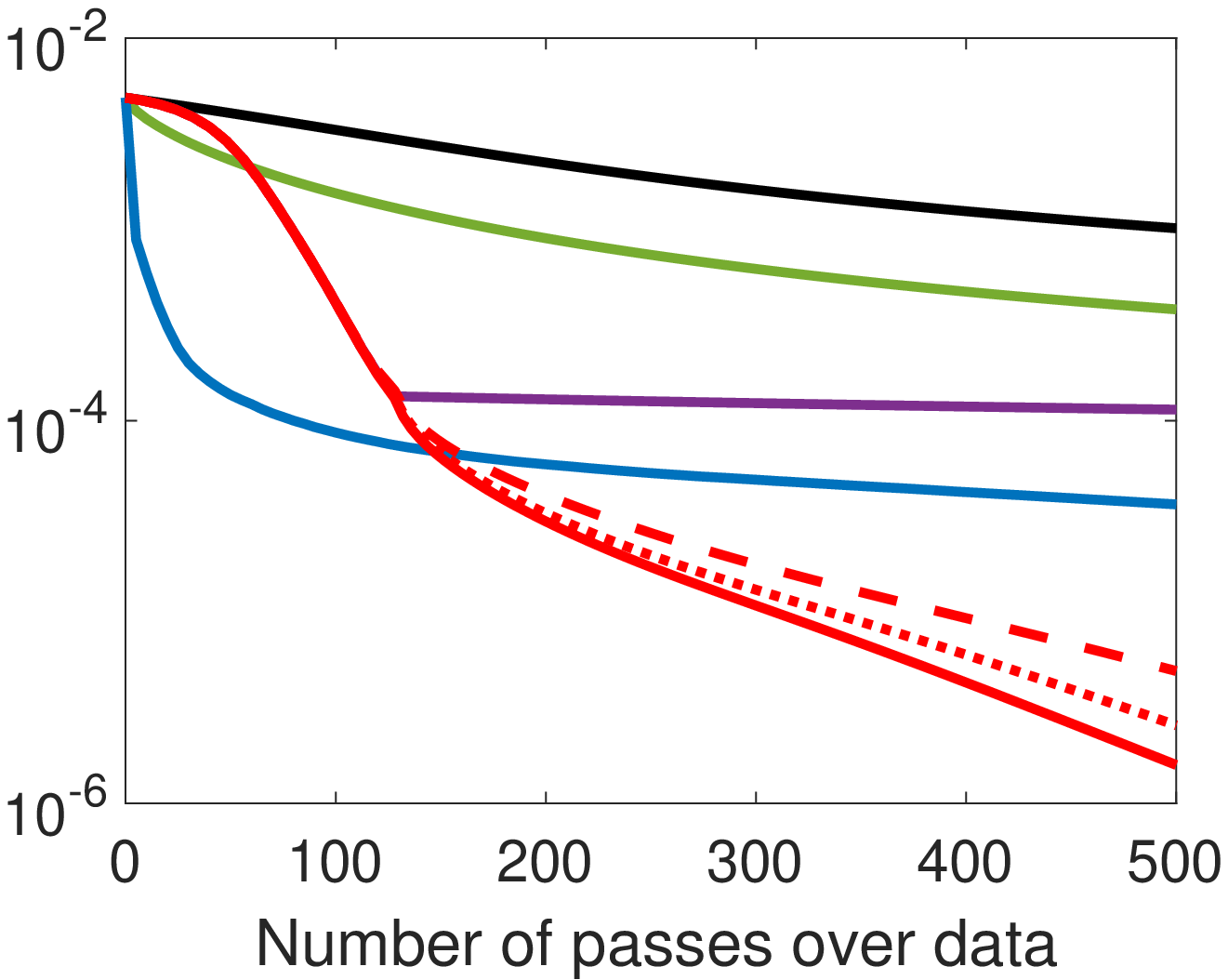}
\end{tabular}
\caption{Comparison of the convergence behavior ($\rho_1 - \w_t^\top \A \w_t$  vs. number of coordinate passes) of various algorithms for computing the leading eigenvector on synthetic datasets.}
\label{fig:simulated_experiments}
\end{figure*}

\subsection{Real-world datasets}
\label{sec:expts-real}

A major source of large-scale sparse eigenvalue problem in machine learning comes from graph-related applications~\citep{ShiMalik00a,Ng_02a,Fowlkes_04a}. 
We now examine the efficiency of the proposed algorithms on several large-scale networks datasets with up to a few million nodes\footnote{\url{http://snap.stanford.edu/data/index.html}}, whose the statistics are summarized in Table~\ref{tab:data} (see Appendix~\ref{sec:statistics-realdata}). 
Let $\W$ be the (binary) adjacency matrix of each network, our goal is to compute the top eigenvectors for the corresponding normalized graph Laplacian matrix~\citep{Luxbur07a} defined as $\A = \D^{-1/2} (\D - \W) \D^{-1/2}$, where $\D$ is a diagonal matrix containing the degrees of each node on the diagonal. This task can be extended to computing a low-rank approximation of the normalized graph Laplacian~\citep{GittenMahoney13a}.~\footnote{We have also experimented with the original task of ~\citet{Lei_16a}. For their task, our algorithm do not achieve significant improvement over CPM/SGCD because the gap of the unnormalized adjacency matrix $\W$ is quite large.} 
We discuss implementation details in Appendix~\ref{sec:statistics-realdata}.

\begin{figure*}[b]
\centering
\psfrag{Objective Suboptimality}[][]{Suboptimality}
\psfrag{Time(s)}[][]{\scriptsize Time (seconds)}
\begin{tabular}{@{}c@{\hspace{0\linewidth}}c@{\hspace{0\linewidth}}c@{\hspace{0\linewidth}}c@{}}
amazon0601 & com-Youtube & email-Enron & email-EuAll \\
\includegraphics[width=0.25\linewidth]{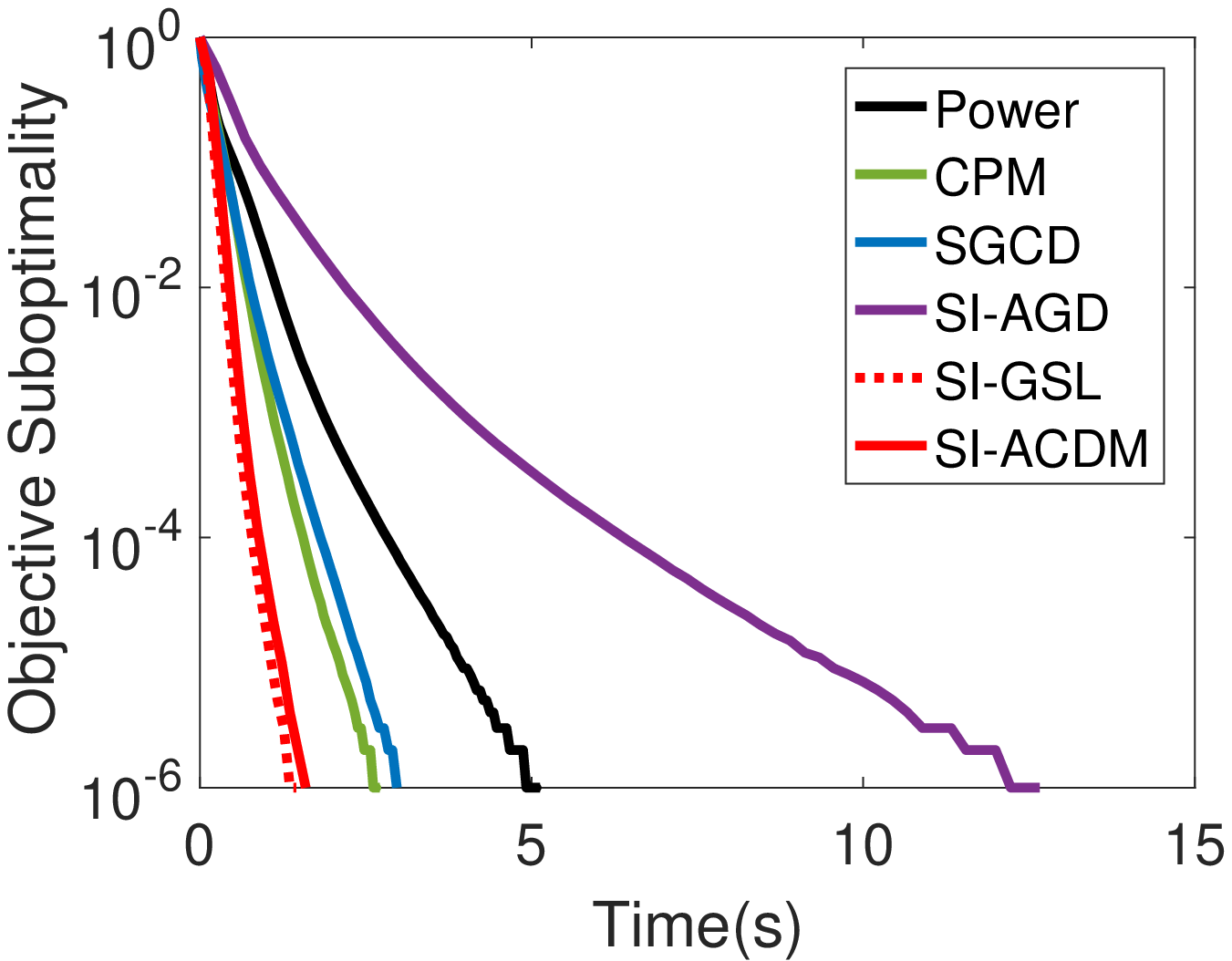} & 
\includegraphics[width=0.25\linewidth]{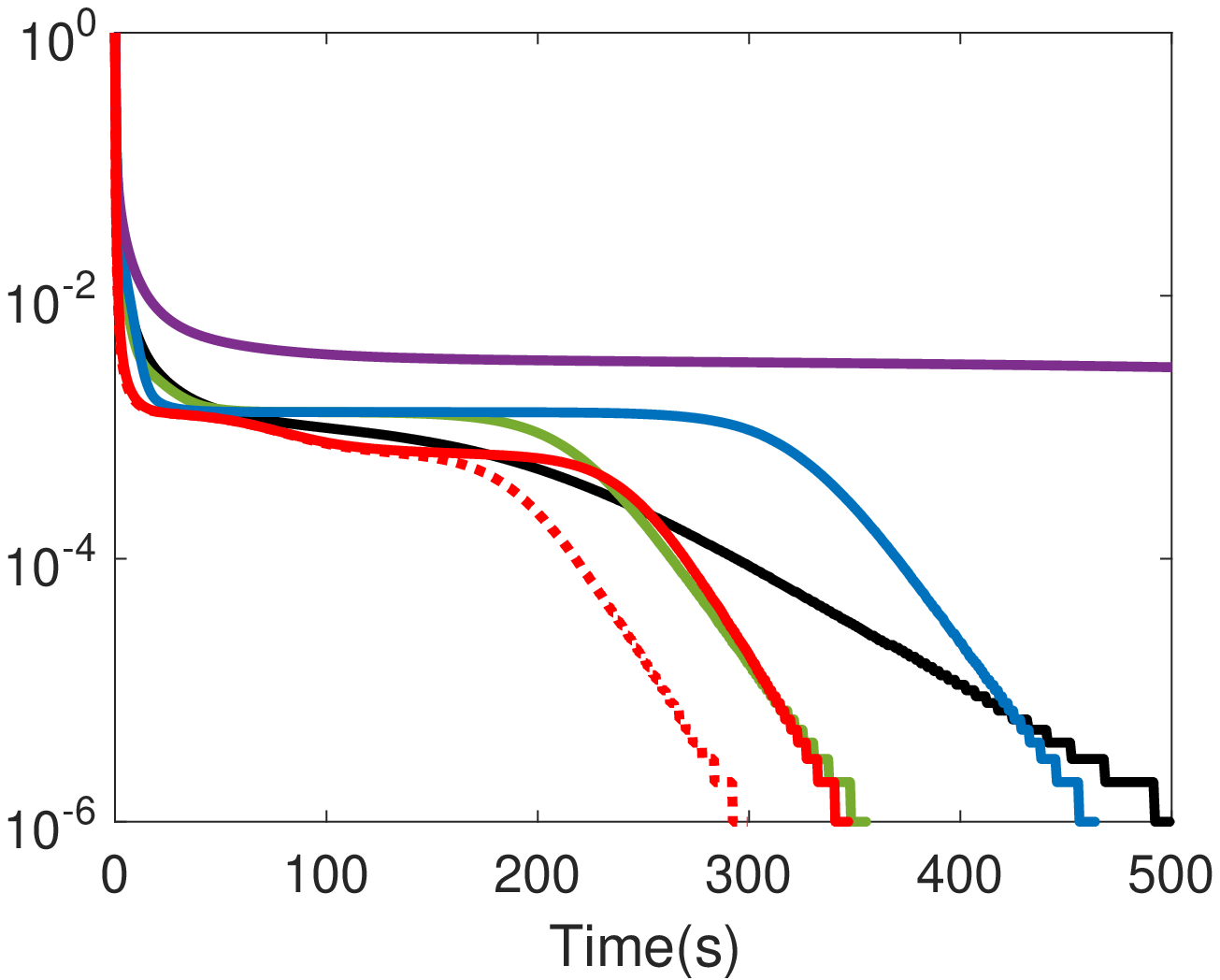} &
\includegraphics[width=0.25\linewidth]{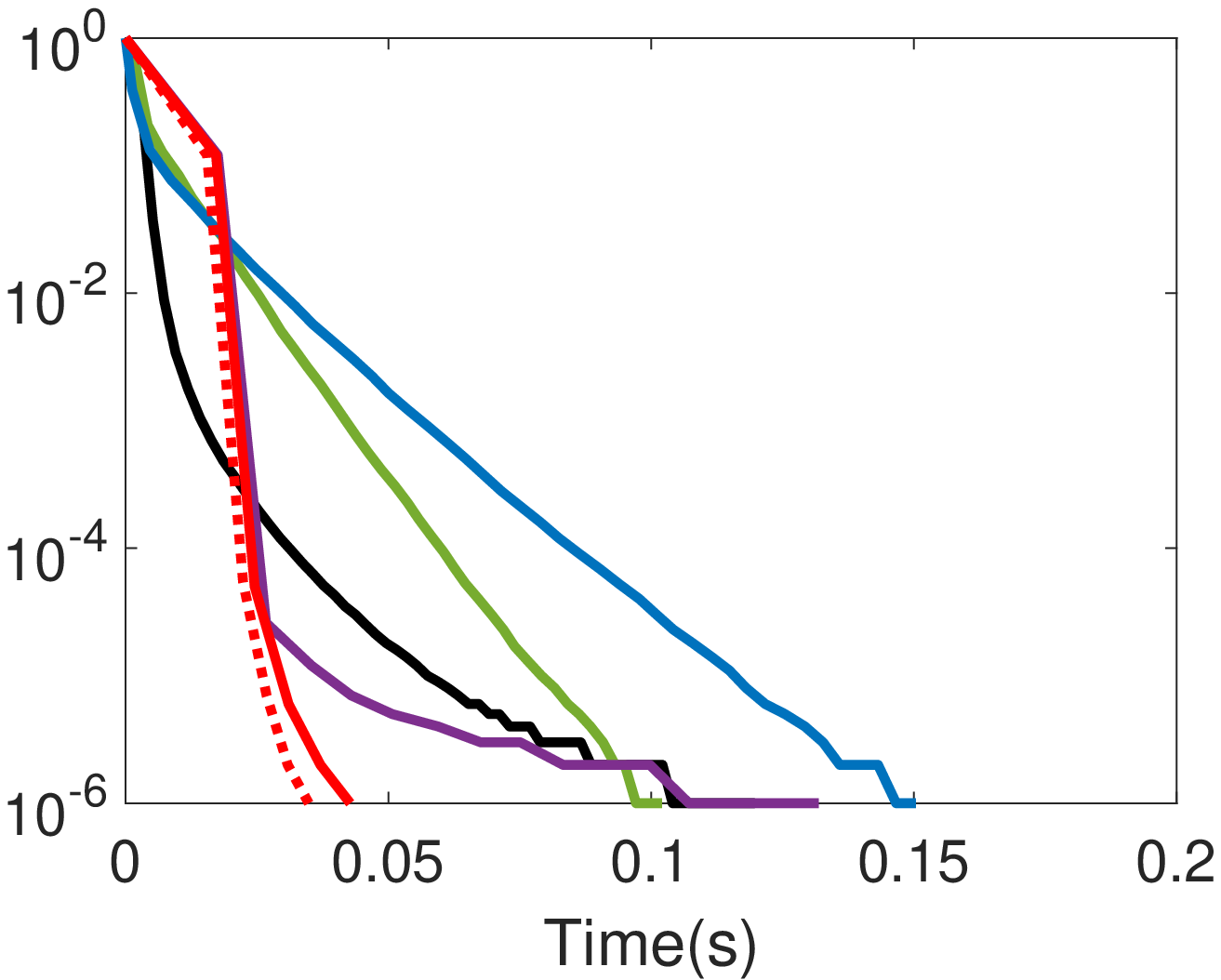} & 
\includegraphics[width=0.25\linewidth]{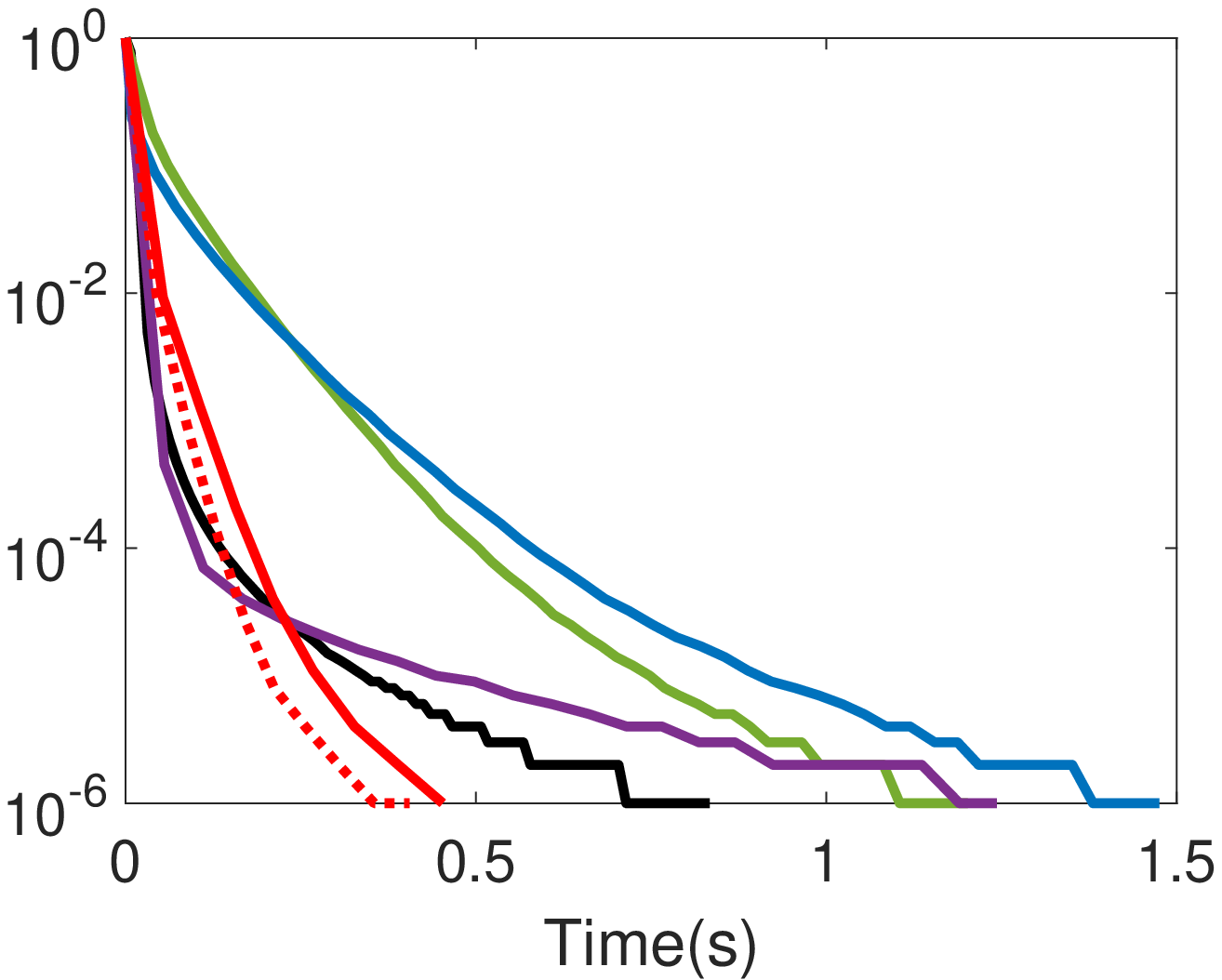} \\[1ex]
p2p-Gnutella31 & roadNet-CA & roadNet-PA & soc-Epinions1 \\
\includegraphics[width=0.25\linewidth]{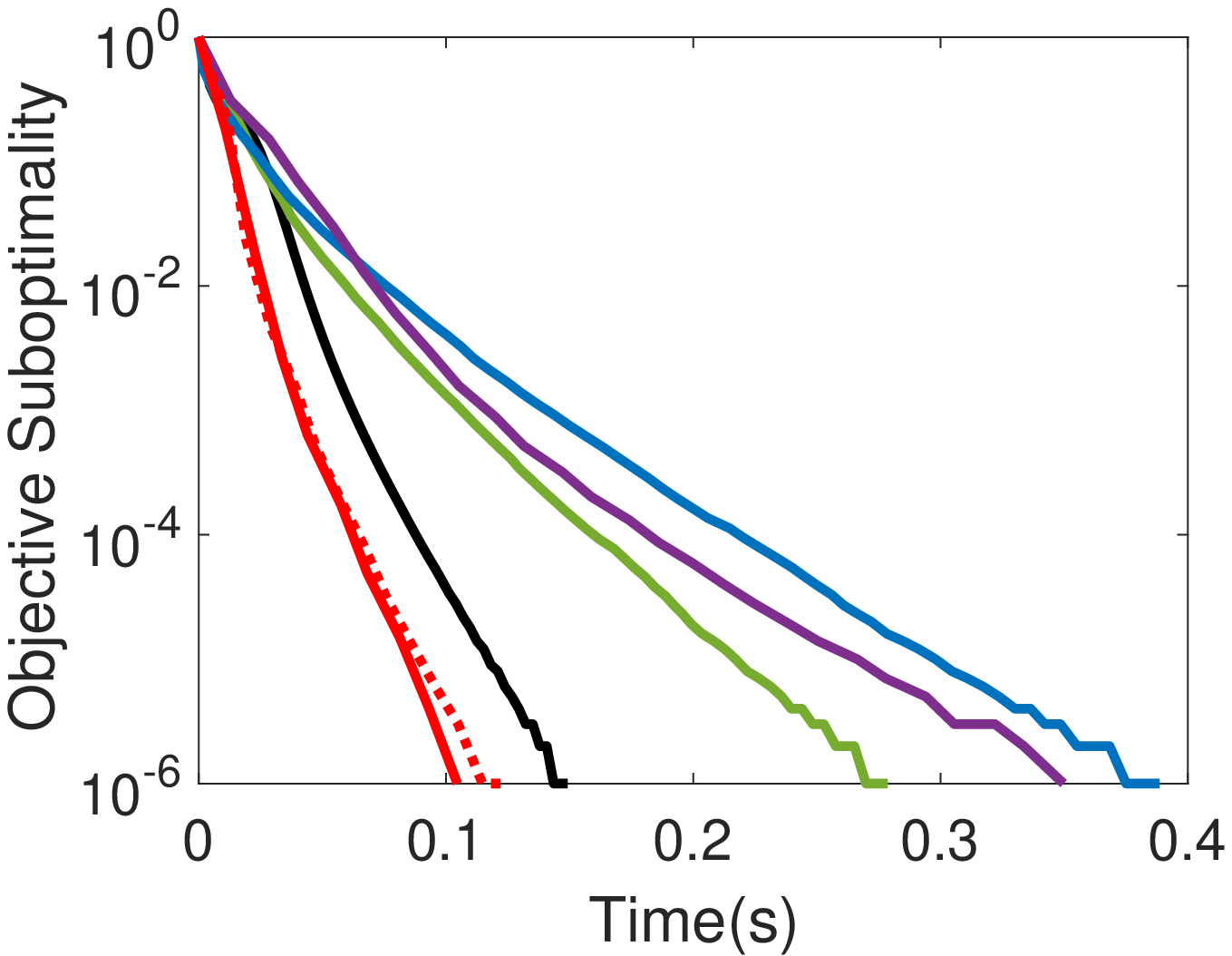} & 
\includegraphics[width=0.25\linewidth]{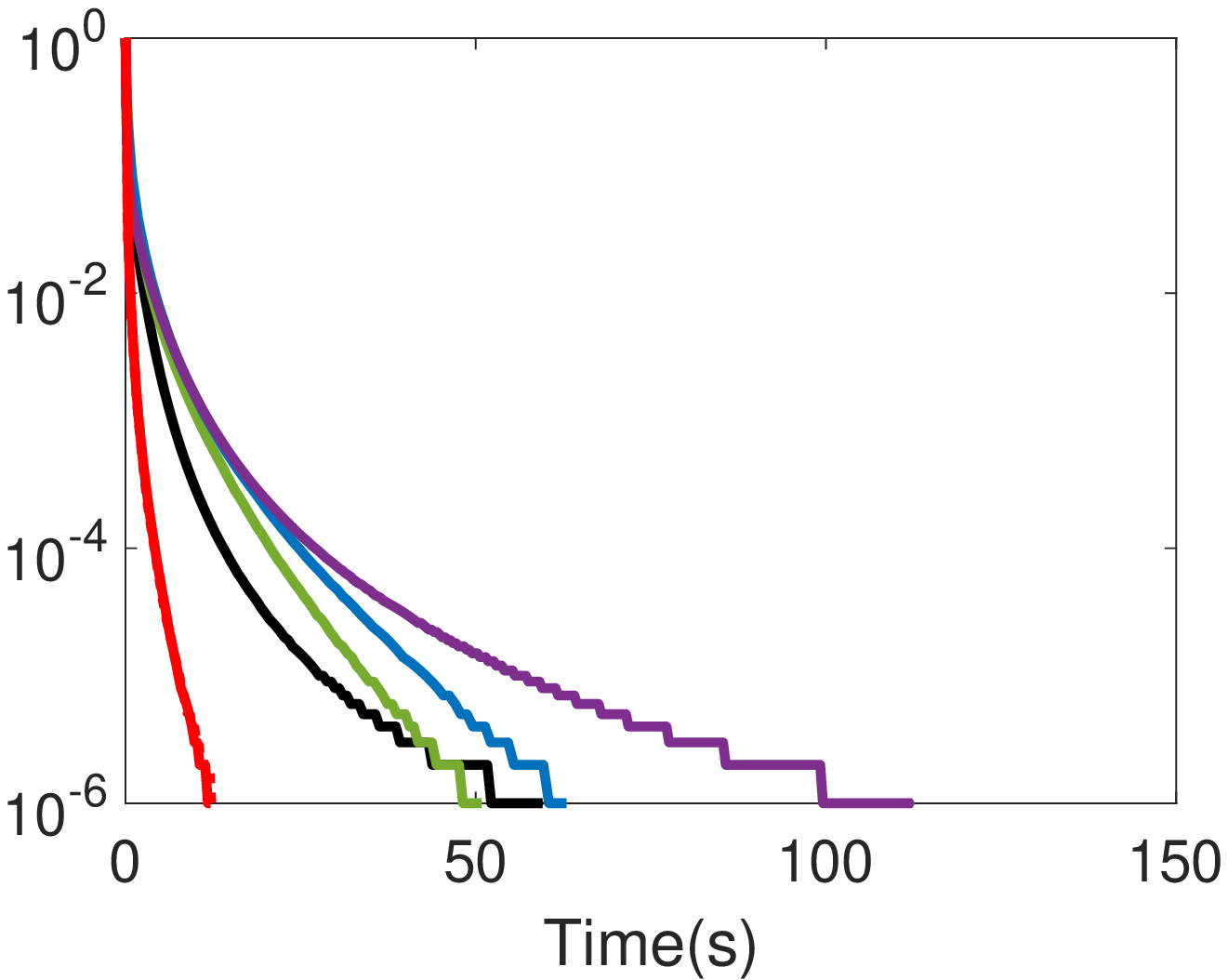} &
\includegraphics[width=0.25\linewidth]{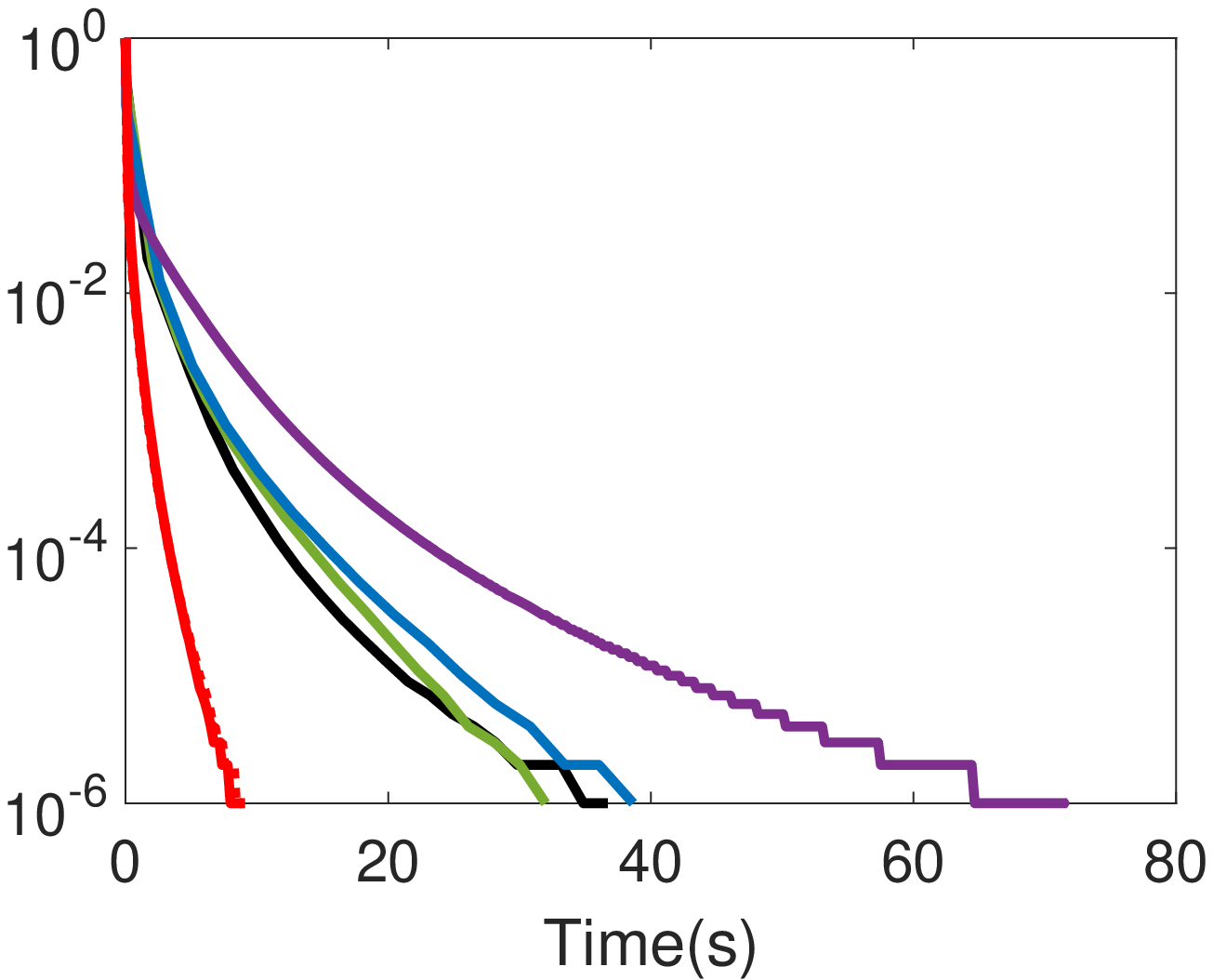} & 
\includegraphics[width=0.25\linewidth]{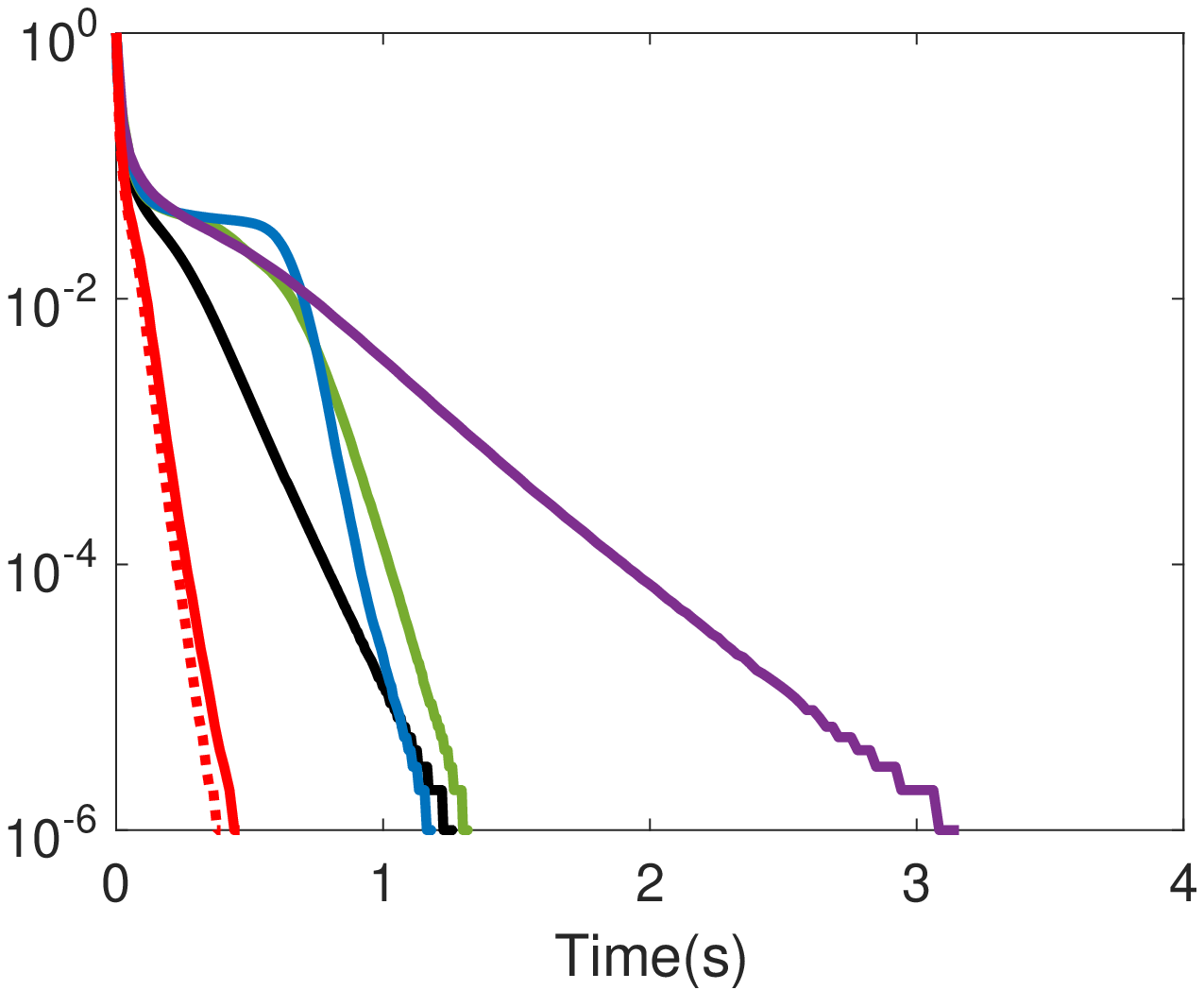} \\[1ex]
web-BerkStan & web-Google & web-NotreDame & wiki-Talk \\
\includegraphics[width=0.25\linewidth]{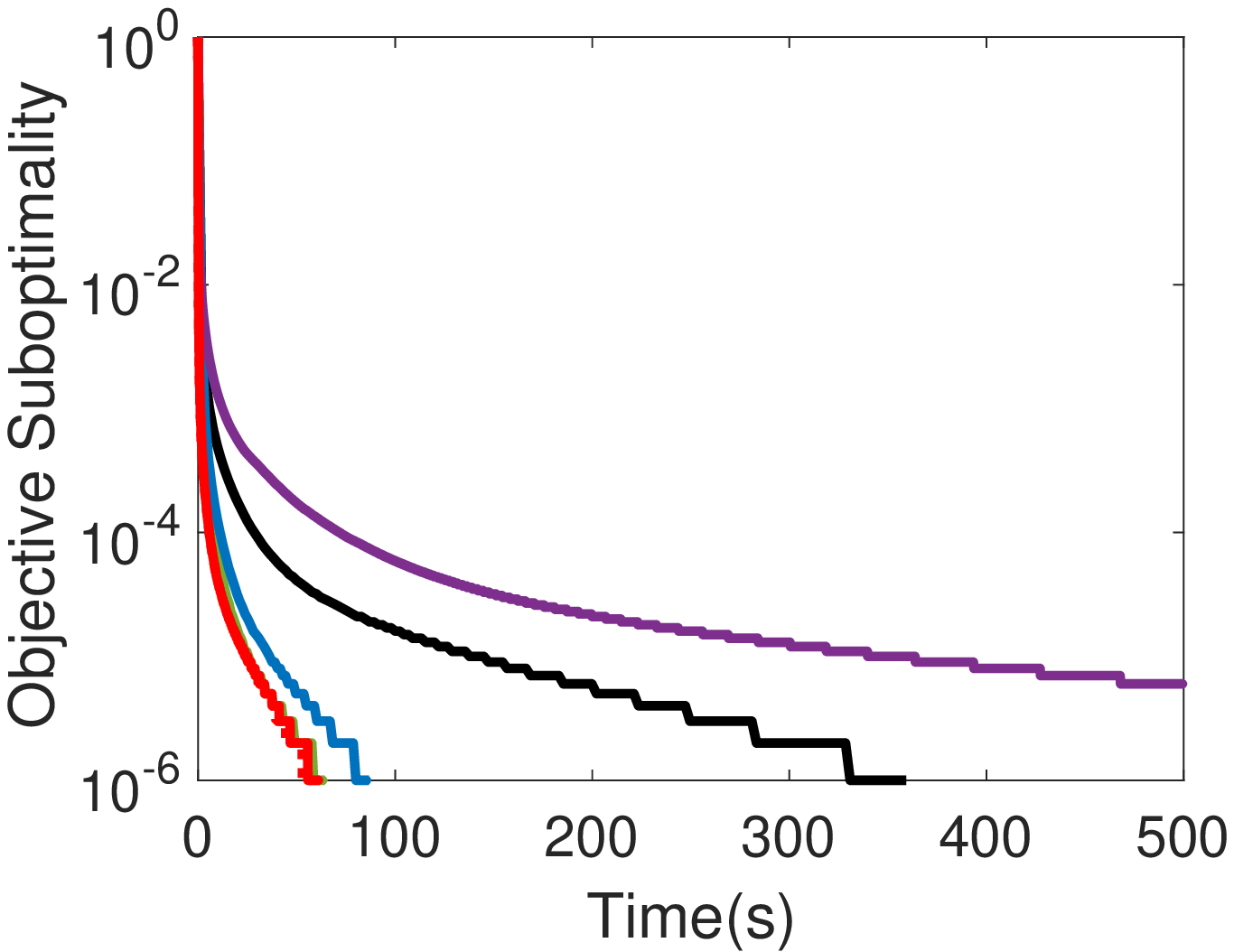} & 
\includegraphics[width=0.25\linewidth]{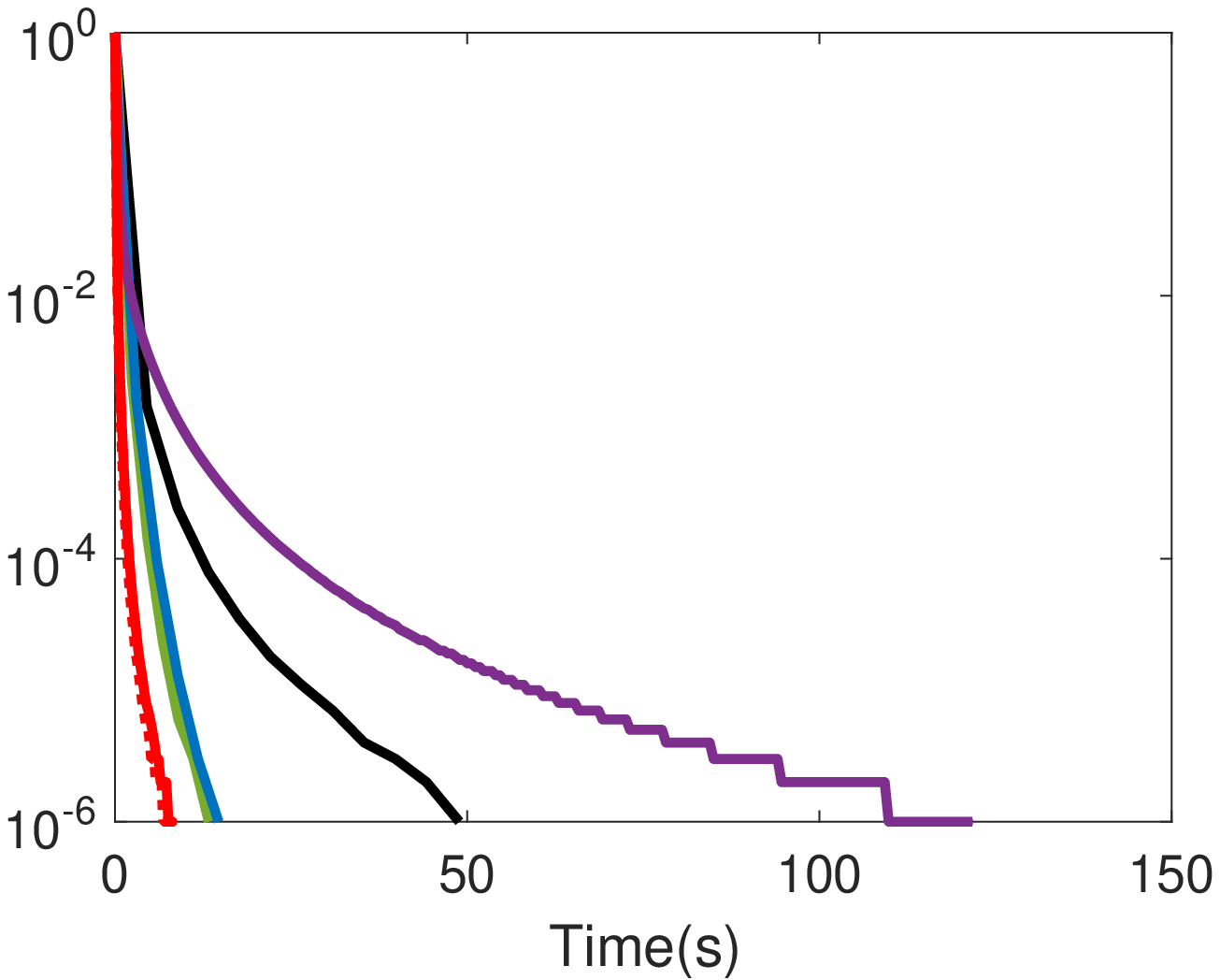} &
\includegraphics[width=0.25\linewidth]{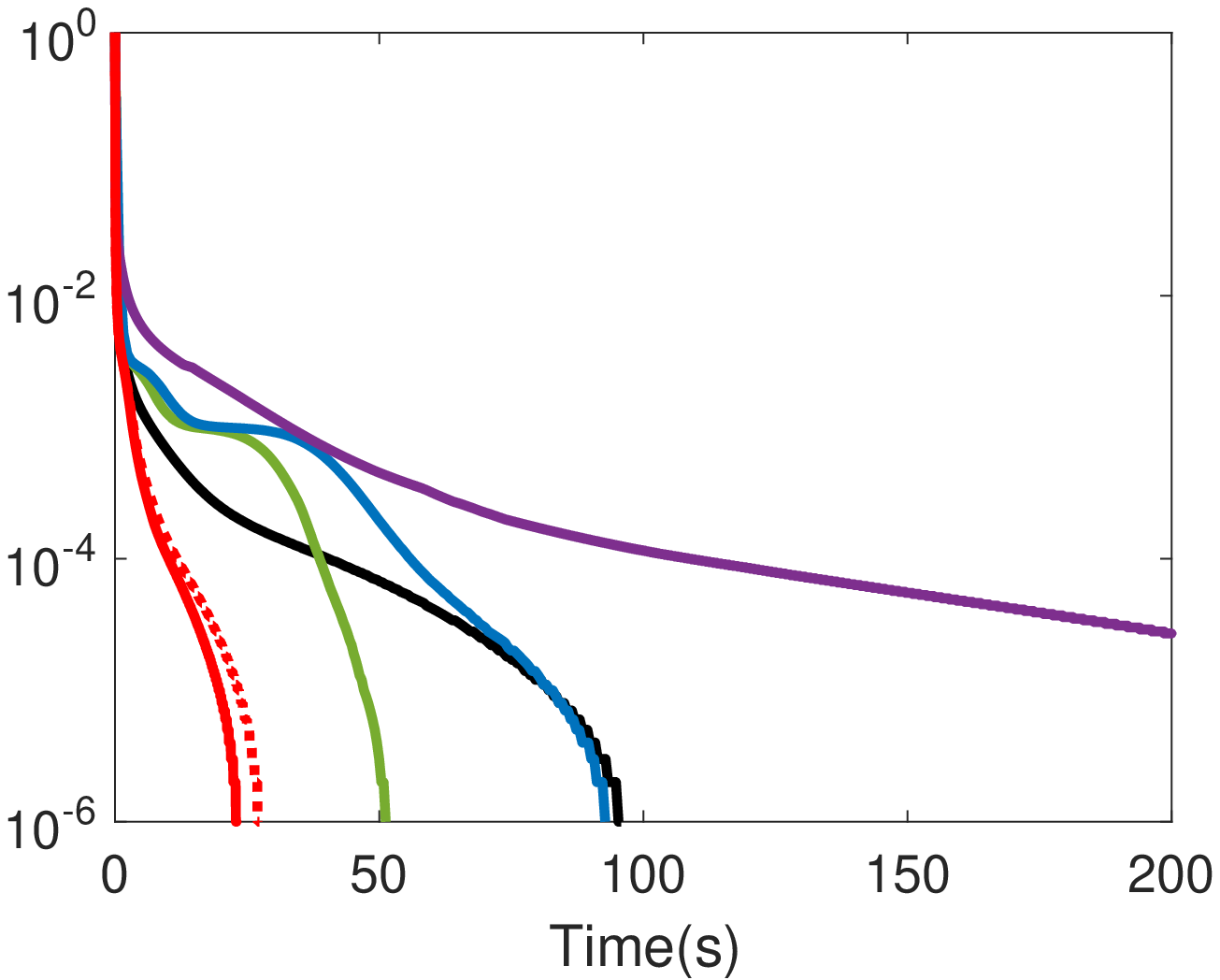} & 
\includegraphics[width=0.25\linewidth]{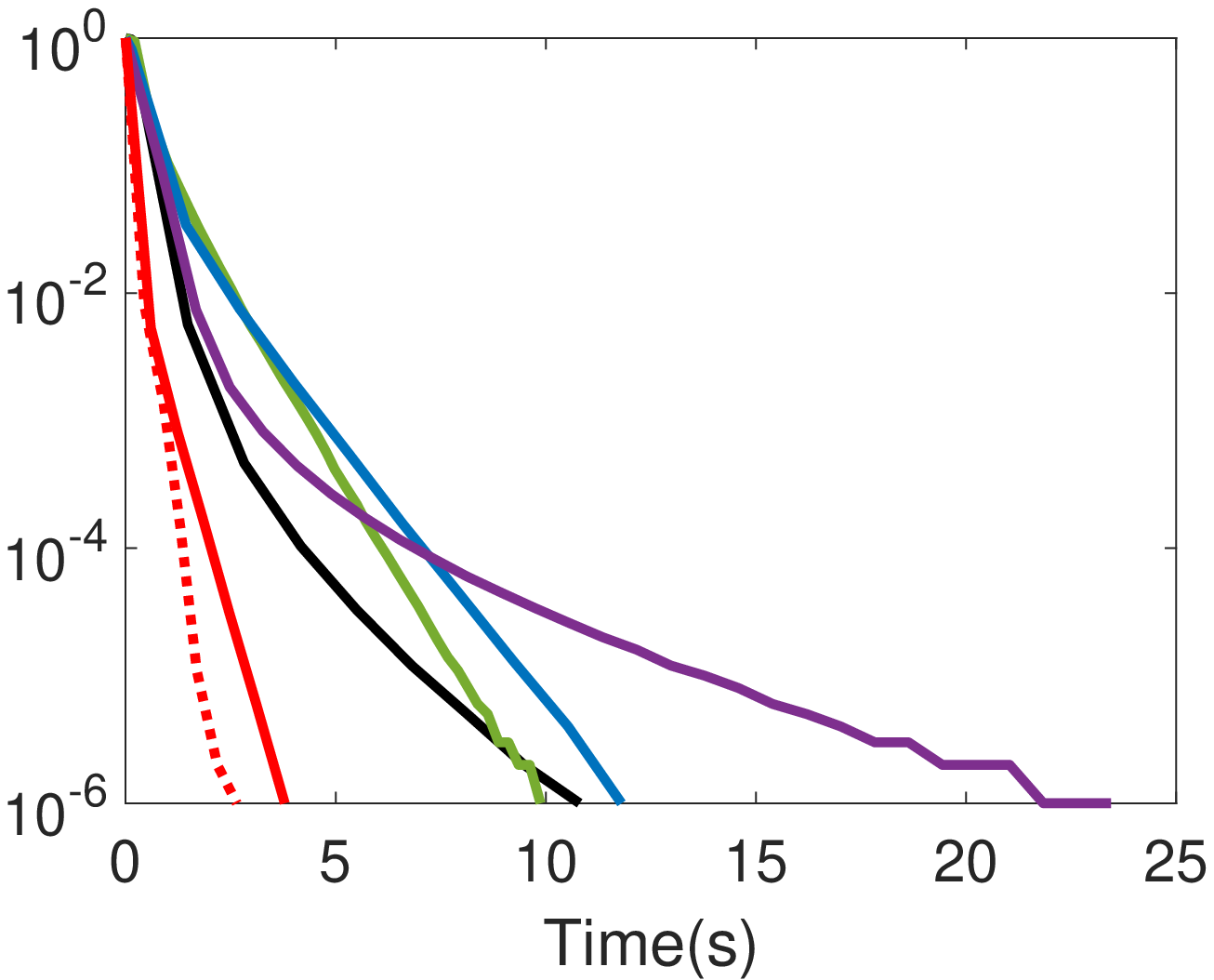} 
\end{tabular}
\caption{Comparison of the runtime efficiency ($\rho_1 - \w_t^\top \A \w_t$ vs. run time in seconds) of various algorithms for computing the leading eigenvector on real-world network datasets.}
\label{fig:realdata_experiments}
\end{figure*}

The results are summarized in Figure~\ref{fig:realdata_experiments}. We observe that the proposed methods usually improves over CPM and SGCD, with up to $5$x speedup in runtime over CPM/SGD depending on the dataset. Different from the simulation study, though SI-ACDM have faster convergence in theory, it is often slightly slower than SI-GSL due to more maintainance for each update (it needs to maintain two vector sequences instead of one for plain CD methods).


\cleardoublepage
\balance
\bibliographystyle{icml2017}
\bibliography{cpm}


\cleardoublepage
\appendix
\appendix

\section{Auxiliary Lemmas}
\label{sec:auxiliary}

\begin{lem} \label{lem:alignment-to-objective}
Consider a positive semidefinite matrix $\A \in \bbR^{d\times d}$ with eigenvalues $\rho_1 \ge  \dots \ge \rho_d \ge 0$ and corresponding eigenvectors $\p_1,\dots,\p_d$. For a unit vector $\vv$ satisfying $\vv^\top \p_1 \ge 1 - \epsilon$ where $\epsilon\in (0,1)$, we have
\begin{align*}
\vv^\top \A \vv \ge \rho_1 (1-2\epsilon).
\end{align*}
\end{lem}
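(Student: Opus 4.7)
The plan is to expand $\vv$ in the orthonormal eigenbasis $\{\p_1,\dots,\p_d\}$ and then exploit the fact that all eigenvalues are nonnegative so that the "tail" contribution to $\vv^\top \A \vv$ is harmless.

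First, I would write $\vv = \sum_{i=1}^d \alpha_i \p_i$ with $\alpha_i = \vv^\top \p_i$, noting that $\sum_{i=1}^d \alpha_i^2 = \norm{\vv}^2 = 1$ and $\alpha_1 \ge 1-\epsilon$ by hypothesis. Then in this basis the Rayleigh quotient decomposes as $\vv^\top \A \vv = \sum_{i=1}^d \rho_i \alpha_i^2$.

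Next, since the matrix is positive semidefinite, every term $\rho_i \alpha_i^2$ with $i \ge 2$ is nonnegative, so I can simply drop them to obtain the one-line bound $\vv^\top \A \vv \ge \rho_1 \alpha_1^2 \ge \rho_1 (1-\epsilon)^2$. Expanding $(1-\epsilon)^2 = 1 - 2\epsilon + \epsilon^2$ and using $\rho_1 \epsilon^2 \ge 0$ gives $\rho_1(1-\epsilon)^2 \ge \rho_1(1-2\epsilon)$, which is the desired inequality.

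There is no real obstacle here; the only mild subtlety is that the statement uses the linear inner product condition $\vv^\top \p_1 \ge 1-\epsilon$ rather than the squared condition, but once we observe $\alpha_1^2 \ge (1-\epsilon)^2$ the PSD assumption makes the rest immediate. I would not introduce any additional machinery.
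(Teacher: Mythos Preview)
Your proposal is correct and matches the paper's proof essentially line for line: expand in the eigenbasis, drop the nonnegative tail terms to get $\vv^\top \A \vv \ge \rho_1 (\vv^\top \p_1)^2 \ge \rho_1(1-\epsilon)^2 \ge \rho_1(1-2\epsilon)$.
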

\begin{proof} By direct calculation, we obtain
\begin{align*}
\vv^\top \A \vv & = \vv^\top \left( \sum_{i=1}^d  \rho_i \p_i \p_i^\top \right) \vv \\
& =  \sum_{i=1}^d  \rho_i \left(\vv^\top \p_i\right)^2 \\
& \ge  \rho_1 (\vv^\top \p_1)^2 \\
& \ge \rho_1 (1-\epsilon)^2 \\
& \ge \rho_1 (1-2\epsilon).
\end{align*}
\end{proof}

\begin{lem} \label{lem:normalized-difference}
For two nonzero vectors $\aa, \b \in \bbR^{d}$, we have
\begin{align*}
\norm{ \frac{\aa}{\norm{\aa}} - \frac{\b}{\norm{\b}}} \le 2 \frac{\norm{\aa-\b}}{\norm{\aa}}.
\end{align*}
\end{lem}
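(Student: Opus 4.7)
The plan is to bound the difference of normalized vectors via a standard "add and subtract" trick, followed by the reverse triangle inequality.

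First, I would rewrite the difference by inserting an intermediate term with $\b$ in the numerator and $\norm{\aa}$ in the denominator:
\begin{align*}
\frac{\aa}{\norm{\aa}} - \frac{\b}{\norm{\b}}
= \frac{\aa - \b}{\norm{\aa}} + \b\left(\frac{1}{\norm{\aa}} - \frac{1}{\norm{\b}}\right)
= \frac{\aa - \b}{\norm{\aa}} + \frac{\b}{\norm{\aa}} \cdot \frac{\norm{\b} - \norm{\aa}}{\norm{\b}}.
\end{align*}
Then applying the triangle inequality and noting that $\norm{\b/\norm{\b}} = 1$, I get
\begin{align*}
\norm{\frac{\aa}{\norm{\aa}} - \frac{\b}{\norm{\b}}} \le \frac{\norm{\aa-\b}}{\norm{\aa}} + \frac{\abs{\norm{\b} - \norm{\aa}}}{\norm{\aa}}.
\end{align*}

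Next, I would invoke the reverse triangle inequality, $\abs{\norm{\b} - \norm{\aa}} \le \norm{\aa - \b}$, to bound the second term by $\norm{\aa-\b}/\norm{\aa}$ as well. Adding the two contributions yields the claimed factor of $2$.

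There is no real obstacle here; the only subtlety is choosing to insert $\b/\norm{\aa}$ rather than $\aa/\norm{\b}$, which is what makes $\norm{\aa}$ (and not $\norm{\b}$) appear in the denominator of the final bound. The estimate is tight up to the constant $2$, and the asymmetric form is exactly what is needed to combine with the warm-start analysis earlier (where one naturally has a good handle on the norm of the previous iterate playing the role of $\aa$).
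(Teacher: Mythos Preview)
Your proof is correct and is essentially identical to the paper's: both insert the intermediate term $\b/\norm{\aa}$, apply the triangle inequality to split into $\norm{\aa-\b}/\norm{\aa}$ plus $\abs{\norm{\aa}-\norm{\b}}/\norm{\aa}$, and then use the reverse triangle inequality on the second piece. (One minor remark: in the paper this lemma is actually invoked in the crude-regime analysis of inexact power iterations, not in the warm-start bound, but that does not affect the proof.)
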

\begin{proof} By direct calculation, we obtain
\begin{align*}
\norm{ \frac{\aa}{\norm{\aa}} - \frac{\b}{\norm{\b}}} & \le 
\norm{ \frac{\aa}{\norm{\aa}} - \frac{\b}{\norm{\aa}}} +
\norm{ \frac{\b}{\norm{\aa}} - \frac{\b}{\norm{\b}}} \\
& =  \frac{ \norm{\aa - \b}}{\norm{\aa}} +
\norm{\b} \cdot  \frac{ \abs{ \norm{\aa} - \norm{\b}}}{\norm{\aa} \norm{\b}}\\
& \le \frac{ \norm{\aa - \b}}{\norm{\aa}} + \frac{ \norm{\aa - \b}}{\norm{\aa}} \\
& = 2 \frac{\norm{\aa-\b}}{\norm{\aa}}
\end{align*}
where we have used the triangle inequality in the second inequality.
\end{proof}

\section{Analysis of inexact power method in different regimes}
\label{sec:iteration-complexity-inexact-power-method-appendix}

\begin{lem}[Iteration complexity of inexact power method]
  \label{lem:iteration-complexity-inexact-power-method}
  Consider the following inexact shift-and-invert power iterations: for $t=1,\dots,$
  \begin{gather*}
    \tw_{t} \approx  \argmin_{\w}\; f_t (\w) = \frac{1}{2} \w^\top (\lambda \I - \A) \w - \w_{t-1}^\top \w,  \\
    \w_{t} \leftarrow \frac{\tw_{t}}{\norm{\tw_{t}}},
  \end{gather*}
  where $f_t (\tw_{t}) \le \min_{\w} f_t (\w) + \epsilon_t$. 
  \begin{itemize}
  \item (Crude regime) Define $T_1=\ceil{ \frac{2}{\epsilon} \log \left( \frac{4}{\epsilon \xi_{01}^2} \right) }$. Assume that for all $t=1,\dots,T_1$, the error in minimizing  $f_t (\w)$ satisfies
    \begin{align*} 
      \epsilon_t \le \frac{\epsilon^2 \beta_d^2}{128 \beta_1} \left( \frac{(2 \beta_1 / \beta_d)-1}{(2 \beta_1 / \beta_d)^{T_1} -1} \right)^2.
    \end{align*}
    Then we have $\w_{T_1}^\top (\lambda - \A)^{-1} \w_{T_1} \ge (1-\epsilon) \beta_1$.

  \item (Accurate regime) Define $G_0 := \frac{ \sqrt{ \sum_{i=2}^d  \xi_{0i}^2 / \beta_i }}{ \sqrt{ \xi_{01}^2 / \beta_1}}$. Assume that for all $t\ge 1$, the error in minimizing  $f_t (\w)$ satisfies
    \begin{align*} 
      \epsilon_t \le  \min \left( {\sum_{i=2}^d \xi_{(t-1)i}^2 / \beta_i},\ {\xi_{(t-1)1}^2 / \beta_1} \right) \cdot  \frac{\left( \beta_1 - \beta_2 \right)^2}{32}.
    \end{align*}
    Let $\gamma = \frac{3 \beta_1 + \beta_2}{\beta_1 + 3 \beta_2} > 1$ and $T_2=\ceil{\frac{1}{2} \log_{\gamma} \left( \frac{G_0^2}{\epsilon} \right)}$. Then we have $\w_t^\top \p_1 \ge 1 - \epsilon$ for all $t\ge T_2$.
  \end{itemize}
\end{lem}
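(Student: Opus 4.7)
My plan is to analyze the two regimes separately, after a shared preliminary step that controls how far the inexact normalized iterate drifts from the exact one. Let $\B := (\lambda \I - \A)^{-1}$, with eigenvalues $\beta_i$ and eigenvectors $\p_i$, and set $\hat{\w}_t := \B\w_{t-1}/\norm{\B\w_{t-1}}$ for the exact normalized iterate. Because $f_t$ is $(\lambda - \rho_1) = 1/\beta_1$-strongly convex with minimizer $\B\w_{t-1}$, an $\epsilon_t$-approximate minimizer satisfies $\norm{\tw_t - \B\w_{t-1}} \le \sqrt{2\beta_1 \epsilon_t}$. Combining this with Lemma~\ref{lem:normalized-difference} and the elementary bound $\norm{\B\w_{t-1}} \ge \beta_d$ yields the uniform per-step perturbation $\norm{\w_t - \hat{\w}_t} \le 2\sqrt{2\beta_1 \epsilon_t}/\beta_d$, which serves as the workhorse for both regimes.

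\textbf{Crude regime.} Without relying on the eigengap, I would directly control the Rayleigh quotient $R_t := \w_t^\top \B \w_t$ by expanding $\w_t = \sum_i \xi_{ti}\p_i$ and tracking relative eigencomponent weights. For the exact iteration the ratios $\xi_{ti}^2/\xi_{t1}^2$ shrink by $(\beta_i/\beta_1)^2$ per step. Partitioning indices into $S := \{i : \beta_i \ge (1-\epsilon/4)\beta_1\}$ and its complement, weights in $S^c$ decay geometrically at rate $(1-\epsilon/4)^{2t}$; the choice $T_1 = \ceil{(2/\epsilon)\log(4/(\epsilon\xi_{01}^2))}$ is calibrated to drive the total $S^c$-mass below $\epsilon/2$, which together with the $S$ bound forces $R_{T_1} \ge (1-\epsilon)\beta_1$. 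To absorb inexactness I would propagate each per-step perturbation through the remaining exact steps; because applying $\B$ followed by renormalization amplifies error by at most $2\beta_1/\beta_d$ (conditioning of $\B$ times a factor of $2$ from renormalization), the total deviation telescopes to the geometric sum $\sum_{s=1}^{T_1}(2\beta_1/\beta_d)^{T_1-s}\sqrt{2\beta_1\epsilon_s}/\beta_d$. The prescribed bound on $\epsilon_t$ is exactly what keeps this sum small enough (of order $\epsilon$) to preserve the clean-iteration conclusion.

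\textbf{Accurate regime.} Here I would exploit the eigengap via the potential $\Phi_t := (\sum_{i\ge 2}\xi_{ti}^2/\beta_i)/(\xi_{t1}^2/\beta_1)$, whose initial value is precisely $G_0^2$. A direct calculation shows the exact step satisfies $\hat{\Phi}_{t+1} \le (\beta_2/\beta_1)^2 \Phi_t$: after the common factor $\norm{\B\w_{t-1}}^2$ cancels, it suffices to bound $\sum_{i\ge 2}\beta_i \xi_{ti}^2 \le \beta_2\sum_{i\ge 2}\xi_{ti}^2$ and $\sum_{i\ge 2}\xi_{ti}^2/\beta_i \ge (1/\beta_2)\sum_{i\ge 2}\xi_{ti}^2$. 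Using the per-step perturbation bound and the prescribed $\epsilon_t \le (\beta_1-\beta_2)^2/32 \cdot \min(\sum_{i\ge 2}\xi_{(t-1)i}^2/\beta_i,\, \xi_{(t-1)1}^2/\beta_1)$ to bound the noise as a controllable fraction of both the numerator and the denominator of $\Phi_{t+1}$, I would then deduce $\Phi_{t+1} \le \gamma^{-2}\Phi_t$ for the stated $\gamma = (3\beta_1+\beta_2)/(\beta_1+3\beta_2)$. Iterating $T_2$ times gives $\Phi_{T_2} \le \epsilon$, which upon unpacking the definition converts to $\xi_{T_2,1}^2 \ge 1-\calO(\epsilon)$ and thus $\w_{T_2}^\top \p_1 \ge 1-\epsilon$ after rescaling constants.

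\textbf{Main obstacle.} The delicate step is the accurate-regime bookkeeping establishing $\Phi_{t+1} \le \gamma^{-2}\Phi_t$: perturbation enters both the numerator and denominator of $\Phi_{t+1}$, so one must simultaneously upper-bound the noisy numerator and lower-bound the noisy denominator by the corresponding exact quantities. The symmetric form of the prescribed $\epsilon_t$ -- taking the $\min$ of the two separate eigencomponent quantities -- is precisely what makes both bounds tight with the same contraction constant $\gamma^{-2}$; identifying this two-sided dependence, rather than a single condition on $\Phi_t$, is the main technical insight. The crude regime is more routine but requires patient bookkeeping to carry the amplification factor $(2\beta_1/\beta_d)^{T_1}$ through the error summation, which explains why exactly this factor appears in the prescribed bound on $\epsilon_t$.
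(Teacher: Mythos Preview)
Your crude-regime outline is essentially the paper's argument: compare to the exact power iterates from the same start, show the exact iterates put at most $\epsilon/4$ mass outside the set $\{i:\beta_i\ge(1-\epsilon/4)\beta_1\}$ after $T_1$ steps, and control $\norm{\w_t-\vv_t}$ via the recursion $S_t\le\sqrt{2\beta_1\epsilon_t}+(2\beta_1/\beta_d)S_{t-1}$, whose unfolding produces exactly the geometric factor in the prescribed $\epsilon_t$. Nothing to add there.

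The accurate regime, however, has a real gap. You declare the Euclidean bound $\norm{\w_t-\hat\w_t}\le 2\sqrt{2\beta_1\epsilon_t}/\beta_d$ to be the ``workhorse for both regimes'' and then plan to use it to perturb the numerator and denominator of $\Phi_t=\sum_{i\ge2}\xi_{ti}^2/\beta_i\big/(\xi_{t1}^2/\beta_1)$. But both pieces of $\Phi_t$ are $\M_\lambda^{-1}$-norms, so passing from an $\ell_2$ perturbation to a perturbation in these weighted sums costs an additional $1/\sqrt{\beta_d}$, and your bound already carries a $1/\beta_d$. The upshot is that to force the noise to be a small fraction of, say, $\sqrt{\sum_{i\ge2}\xi_{(t-1)i}^2/\beta_i}$ you would need $\epsilon_t\lesssim \beta_d^{3}\beta_1^{-1}\sum_{i\ge2}\xi_{(t-1)i}^2/\beta_i$, whereas the lemma only assumes $\epsilon_t\le \tfrac{(\beta_1-\beta_2)^2}{32}\sum_{i\ge2}\xi_{(t-1)i}^2/\beta_i$, with no $\beta_d$ whatsoever. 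When $\beta_d\ll\beta_1-\beta_2$ your argument cannot close.

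The fix, and what the paper actually does, is to drop the $\ell_2$ detour entirely in this regime and work directly with the \emph{unnormalized} iterate in the $\M_\lambda^{-1}$-norm: since $f_t$ is quadratic with Hessian $\M_\lambda^{-1}$, one has exactly $\normM{\tw_t-\tw_t^*}=\sqrt{2\epsilon_t}$, and because $\calP_\perp,\calP_\parallel$ commute with $\M_\lambda^{-1}$ this gives $\normM{\calP_\perp\tw_t}\le\normM{\calP_\perp\tw_t^*}+\sqrt{2\epsilon_t}$ and $\normM{\calP_\parallel\tw_t}\ge\normM{\calP_\parallel\tw_t^*}-\sqrt{2\epsilon_t}$ with no condition-number loss. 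The common $\norm{\tw_t}$ then cancels in the ratio, and the prescribed $\epsilon_t$ (the $\min$ of the two components, exactly as you noted) yields the clean contraction $G(\w_t)\le\gamma^{-1}G(\w_{t-1})$. Your identification of the two-sided role of the $\min$ is right; what is missing is that the perturbation must be measured in the same norm as the potential.
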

\begin{proof}
  In the following, we use the shorthand $\M_\lambda = (\lambda \I - \A)^{-1}$. Observe that
  \begin{align*}
    \p_i^\top {\M}_{\lambda} \p_i &= \beta_i, \qquad\text{for} \quad i=1,\dots,d, \\
    \p_i^\top {\M}_{\lambda} \p_j &= 0, \qquad\;\,\text{for} \quad i\neq j. 
  \end{align*}

  Let the exact solution to the least squares problem $\min_{\w}\; f_t (\w)$ be $\tw_{t}^* = \M_{\lambda} \w_{t-1}$. If we obtain an approximate solution $\tw_{t}$ such that  $f_{t} (\tw_{t}) - f_{t} (\tw_{t}^*) = \epsilon_t$, it follows from the quadratic objective that 
  \begin{align} \label{e:error-normM}
    \epsilon_t  & = \frac{1}{2} \left(\tw_{t} - \tw_{t}^* \right)^\top  {\M}_{\lambda}^{-1} \left(\w_{t} - \w_{t}^* \right) = \frac{1}{2} \normM{\tw_{t} - \tw_{t}^*}^2.
  \end{align}
  Furthermore, we have for the exact solution that 
  \begin{align} \label{e:lsq-exact-solution}
    \tw_{t}^* = {\M}_{\lambda} \w_{t-1} = {\M}_{\lambda} \sum_{i=1}^d \xi_{(t-1)i} \p_i = 
    \sum_{i=1}^d \beta_i \xi_{(t-1)i} \p_i.
  \end{align}

  \paragraph{Crude regime} 
  For the crude regime, we denote by $n$ the number of eigenvalues of $\M_{\lambda}$ that are greater than or equal to $\left(1 - \frac{\epsilon}{4} \right) \beta_1$.

  We will relate the iterates of inexact power method to those of the exact power method
  \begin{align*}
    \tv_t \leftarrow \M_{\lambda} \vv_{t-1}, \qquad \vv_t \leftarrow \frac{\tv_t}{\norm{\tv_t}}, \qquad t=1,\dots
  \end{align*}
  from the same initialization $\vv_0 = \w_0$.

  On the one hand, we can show that exact power iterations converges quickly for eigenvalue estimation. Since $\vv_t = \frac{\M_{\lambda}^t \vv_0}{\norm{\M_{\lambda}^t \vv_0}}$ and $\M_{\lambda}^t \vv_0 = \sum_{i=1}^d \beta_i^{t} \xi_{0i} \p_i$, we have
  \begin{align*}
    \sum_{i=n+1}^d (\vv_t^\top \p_i)^2 & 
    = \frac{{\sum_{i=n+1}^d \beta_i^{2t} \xi_{0i}^2}}{{\sum_{i=1}^d \beta_i^{2t} \xi_{0i}^2}} 
    \le \frac{ \beta_{n+1}^{2t} \sum_{i=n+1}^d \xi_{0i}^2  }{\beta_1^{2t} \xi_{01}^2} \\
    & \le \frac{1}{\xi_{01}^2} \left( 1 - \frac{\epsilon}{4} \right)^{2t} 
    \le \frac{e^{-\epsilon t / 2}}{\xi_{01}^2}.
  \end{align*}
  This indicates that after $T_1 = \ceil{ \frac{2}{\epsilon} \log \left( \frac{4}{\epsilon \xi_{01}^2} \right)  } $ iterations, we have $\sum_{i=n+1}^d (\vv_t^\top \p_i)^2 \le \frac{\epsilon}{4}$ for all $t\ge T_1$. And as a result, for all $t\ge T_1$, it holds that
  \begin{align*}
    & \vv_t^\top \M_{\lambda} \vv_t \\
    =\ & \vv_t^\top \left(\sum_{i=1}^d \beta_i \p_i \p_i^\top \right) \vv_t = \sum_{i=1}^d \beta_i (\vv_t^\top \p_i)^2 \\
    \ge\ & \beta_{n} \sum_{i=1}^n (\vv_t^\top \p_i)^2 
    \ge \left( 1 - \frac{\epsilon}{4} \right) \beta_1 \cdot \left(1 - \sum_{i=n+1}^d (\vv_t^\top \p_i)^2 \right) \\
    \ge\ &  \left( 1 - \frac{\epsilon}{4} \right)^2 \beta_1 
    \ge \left( 1 - \frac{\epsilon}{2} \right) \beta_1.
  \end{align*}

  On the other hand, we can lower bound $\w_t^\top \M_{\lambda} \w_t$ using $\vv_t^\top \M_{\lambda} \vv_t$:
  \begin{align*}
    \w_t^\top \M_{\lambda} \w_t & = (\vv_t + \w_t - \vv_t)^\top \M_{\lambda} (\vv_t + \w_t - \vv_t) \\
    & \ge \vv_t^\top \M_{\lambda} \vv_t + 2 (\w_t - \vv_t)^\top  \M_{\lambda} \vv_t \\
    & \ge \vv_t^\top \M_{\lambda} \vv_t - 2 \beta_1 \norm{\w_t - \vv_t} \\
    & \ge \left( 1 - \frac{\epsilon}{2} \right) \beta_1 - 2 \beta_1 \norm{\w_t - \vv_t}.
  \end{align*}
  Therefore, we will have $\w_t^\top \M_{\lambda} \w_t \ge \left( 1 - \epsilon \right) \beta_1$ as desired if we can guarantee $\norm{\w_{T_1} - \vv_{T_1}} \le \frac{\epsilon}{4}$.

  We now upper bound the difference $S_t:=\norm{\tw_t - \tv_t}$ by induction.
  Due to the assumption of same initialization, we have $S_0=0$. For $t\ge 2$, we can decompose the error into two terms:
  \begin{align*}
    \norm{\tw_t - \tv_t} & \le \norm{\tw_t - \M_{\lambda} \w_{t-1}} + \norm{\M_{\lambda} \w_{t-1} - \tv_t} \\
    & \le \norm{\tw_t - \M_{\lambda} \w_{t-1}} + \norm{\M_{\lambda} \w_{t-1} - \M_{\lambda} \vv_{t-1}}.
  \end{align*}
  The first term concerns the error from inexact minimization of $f_t (\w)$ and can be bounded using~\eqref{e:error-normM}:
  \begin{align}\label{e:crude-errors-1}
    \norm{\tw_t - \M_{\lambda} \w_{t-1}} \le \norm{\M_{\lambda}^{\frac{1}{2}}} \cdot \normM{\tw_{t} - \tw_{t}^*} \le  \sqrt{2 \beta_1 \epsilon_t}.
  \end{align}
  The second term concerns the error from inexact target:
  \begin{gather}
    \norm{\M_{\lambda} \w_{t-1} - \M_{\lambda} \vv_{t-1}} \le \norm{\M_{\lambda}} \cdot \norm{\w_{t-1} - \vv_{t-1}} \nonumber \\  \label{e:crude-errors-2}
    = \beta_1 \norm{ \frac{\tw_{t-1}}{\norm{\tw_{t-1}}} - \frac{\tv_{t-1}}{\norm{\tv_{t-1}}} } \le 2 \beta_1 \frac{\norm{\tw_{t-1} - \tv_{t-1}}}{\norm{\tv_{t-1}}}
  \end{gather}
  where the last inequality is due to Lemma~\ref{lem:normalized-difference}. 

  Noting that $\norm{\tv_{t-1}} = \norm{\M_{\lambda} \vv_{t-2}} \ge \beta_d \norm{\vv_{t-2}} = \beta_d$, and combining~\eqref{e:crude-errors-1} and~\eqref{e:crude-errors-2}, we obtain
  \begin{align*}
    S_t \le \sqrt{2 \beta_1 \epsilon_t} + \frac{2 \beta_1}{\beta_d} S_{t-1}.
  \end{align*}
  Fixing $\epsilon_t=\epsilon^\prime$ for all $t$ and unfolding the above inequality over $t$ yield
  \begin{align*}
    S_{T_1} \le \sqrt{2 \beta_1 \epsilon^\prime} \cdot \sum_{t=0}^{T_1} \left( \frac{2 \beta_1}{\beta_d}\right)^t = \sqrt{2 \beta_1 \epsilon^\prime} \cdot \frac{(2 \beta_1 / \beta_d)^{T_1} -1}{(2 \beta_1 / \beta_d)-1}.
  \end{align*}
  By Lemma~\ref{lem:normalized-difference} again, we have
  \begin{align*}
    \norm{\w_{t} - \vv_{t}} & = \norm{ \frac{\tw_{t}}{\norm{\tw_{t}}} - \frac{\tv_{t}}{\norm{\tv_{t}}} } \le 2 \frac{\norm{\tw_{t} - \tv_{t}}}{\norm{\tv_{t}}} \le \frac{2  S_t}{\beta_d} \\
 & \le \frac{\sqrt{8 \beta_1 \epsilon^\prime}}{\beta_d} \cdot \frac{(2 \beta_1 / \beta_d)^{T_1} -1}{(2 \beta_1 / \beta_d)-1}. 
  \end{align*}
  Setting the RHS to be $\frac{\epsilon}{4}$ gives 
  \begin{align*}
    \epsilon^\prime =  \frac{\epsilon^2 \beta_d^2}{128 \beta_1} \left( \frac{(2 \beta_1 / \beta_d)-1}{(2 \beta_1 / \beta_d)^{T_1} -1} \right)^2.
  \end{align*}
  \paragraph{Accurate regime} In the accurate regime, the potential function we use to evaluate the progress of each iteration is
  \begin{align*}
    G (\w_{t})  = \frac{\normM{\calP_{\perp} \w_{t}}}{\normM{\calP_{\parallel} \w_{t}}} 
    = \frac{ \sqrt{ \sum_{i=2}^d  \xi_{ti}^2 / \beta_i }}{ \sqrt{ \xi_{t1}^2 / \beta_1}},
  \end{align*}
  where $\calP_{\perp}$ and $\calP_{\parallel}$ denote projections onto the subspaces that are orthogonal and parallel to $\p_1$ respectively. 
  Note that 
  \begin{align*}
    \abs{ \sin \theta_t } = \sqrt{ \sum_{i=2}^d  \xi_{ti}^2 } \le 
    \abs{ \tan \theta_t } = \frac{ \sqrt{ \sum_{i=2}^d  \xi_{ti}^2 }}{ \sqrt{ \xi_{t1}^2}}  
    \le G (\w_{t})
  \end{align*}
  where $\theta_t$ is the angle between $\w_t$ and $\p_1$.

  Our goal in this regime is to have $\abs{\sin \theta_t} \le \sqrt{\epsilon}$, as this implies 
  \begin{align*}
    \w_t^\top \p_1 = \cos \theta_t = \sqrt{ 1 - \sin^2 \theta_t  }
    \ge 1 - \sin^2 \theta_t \ge 1 - \epsilon 
  \end{align*}
  as desired. We ensure $\abs{\sin \theta_t} \le \sqrt{\epsilon}$ by requiring that $G (\w_{t}) \le \sqrt{\epsilon}$. 

  In view of~\eqref{e:error-normM} and~\eqref{e:lsq-exact-solution}, we can bound the numerator and denominator of $G(\w_{t})$ respectively with regard to $G(\w_{t-1})$:
  \begin{align*}
    & \normM{\calP_{\perp} \frac{\tw_{t}}{\norm{\tw_{t}}} } \\
    \le\ & \frac{1}{\norm{\tw_{t}}}  \left( \normM{\calP_{\perp} \tw_{t}^* } + \normM{\calP_{\perp} \left( \tw_{t} - \tw_{t}^* \right) } \right) \\
    \le\ & \frac{1}{\norm{\tw_{t}}} \left( \normM{\sum_{i=2}^d \beta_i \xi_{(t-1)i} \p_i } + \normM{ \tw_{t} - \tw_{t}^* } \right) \\
    =\ & \frac{1}{\norm{\tw_{t}}} \left( \sqrt{\sum_{i=2}^d \beta_i \xi_{(t-1)i}^2}  + \sqrt{2 \epsilon_t } \right),
  \end{align*}
  and 
  \begin{align*}
    & \normM{\calP_{\parallel} \frac{\tw_{t}}{\norm{\tw_{t}}} } \\
    \ge\ & \frac{1}{\norm{\tw_{t}}} \left( \normM{\calP_{\parallel} \tw_{t}^* } - \normM{\calP_{\parallel} \left( \tw_{t} - \tw_{t}^* \right) } \right) \\
    \ge\ & \frac{1}{\norm{\tw_{t}}} \left(  \normM{\beta_1 \xi_{(t-1)1} \p_1} - \normM{ \tw_{t} - \tw_{t}^* } \right) \\
    =\ & \frac{1}{\norm{\tw_{t}}} \left( \sqrt{\beta_1 \xi_{(t-1)1}^2} - \sqrt{2 \epsilon_t} \right).
  \end{align*}
  
  Consequently, we have
  \begin{align*}
    G(\w_{t}) 
    & \le \frac{\sqrt{\sum_{i=2}^d \beta_i \xi_{(t-1)i}^2} + \sqrt{2 \epsilon_t }}{ \sqrt{\beta_1 \xi_{(t-1)1}^2} - \sqrt{2 \epsilon_t }} \\
    & \le \frac{\beta_2 \sqrt{\sum_{i=2}^d \xi_{(t-1)i}^2 / \beta_i} + \sqrt{2 \epsilon_t} }{\beta_1 \sqrt{\xi_{(t-1)1}^2 / \beta_1} - \sqrt{2 \epsilon_t}} \\
    & = G(\w_{t}) \cdot \frac{\beta_2  + \frac{\sqrt{2 \epsilon_t}}{\sqrt{\sum_{i=2}^d \xi_{(t-1)i}^2 / \beta_i}}}{\beta_1  - \frac{\sqrt{2 \epsilon_t}}{\sqrt{\xi_{(t-1)1}^2 / \beta_1}}}.
  \end{align*}

  As long as 
\begin{align*}
\sqrt{2 \epsilon_t} \le \min \left( \sqrt{\sum_{i=2}^d \xi_{(t-1)i}^2 / \beta_i},\ \sqrt{\xi_{(t-1)1}^2 / \beta_1}  \right) \cdot \frac{\beta_1 - \beta_2}{4},
\end{align*}
or equivalently
  \begin{align*}
    \epsilon_t \le  \min \left( {\sum_{i=2}^d \xi_{(t-1)i}^2 / \beta_i},\ {\xi_{(t-1)1}^2 / \beta_1} \right) \cdot  \frac{\left( \beta_1 - \beta_2 \right)^2}{32} ,
  \end{align*}
  we are guaranteed that
  \begin{align*}
    G(\w_{t}) \le \frac{\beta_1 + 3 \beta_2}{3 \beta_1 + \beta_2} \cdot G(\w_{t-1}).
  \end{align*}
  And when this holds for all $t\ge 1$, the sequence $\{G(\w_{t})\}_{t=0,\dots}$ decreases (at least) at a constant geometric rate of $\frac{\beta_1 + 3 \beta_2}{3 \beta_1 + \beta_2} < 1 $. Therefore, the number of iterations needed to achieve $\abs{\sin \theta_{T_2}} \le \sqrt{\epsilon}$ is $T_2 = \ceil{ \log_{\gamma} \left( \frac{G(\w_0)}{\sqrt{\epsilon}} \right) } = \ceil{ \frac{1}{2} \log_{\gamma} \left( \frac{G_0^2}{\epsilon} \right) }$ for $\gamma=\frac{3 \beta_1 + \beta_2}{\beta_1 + 3 \beta_2}$.

\end{proof}

\section{Bounding initial error for least squares}
\label{sec:shift-and-invert-warm-start-append}

For each least squares problem $f_t (\w)$ in inexact shift-and-invert power method, we can use $\tw_{t-1}$ from the previous iteration as initialization, and the intuition behind is that as the power power method converges, the least squares problems become increasingly similar. However, an even better warm-start is to use as initialization $\alpha_t \w_{t-1}$ for some $\alpha_t$ and minimize the initial suboptimality over $\alpha_t$~\citep{Garber_16a,Ge_16a}. Let the exact solution to the least squares problem $\min_{\w}\; f_t (\w)$ be $\tw_{t}^* = (\lambda \I - \A)^{-1} \w_{t-1}$, which gives the optimal objective function value $f_t^* = - \frac{1}{2} \w_{t-1}^\top (\lambda \I - \A)^{-1} \w_{t-1}$. 
Observe that
\begin{align*}
  \epsilon_t^{init} & = f_{t} (\alpha_t \w_{t-1}) - f_t^* \\
  & = \frac{\alpha_t^2}{2} \cdot \w_{t-1}^\top (\lambda \I - \A) \w_{t-1} - \alpha_t \cdot \w_{t-1}^\top \w_{t-1} - f_t^*.
\end{align*}
This is a quadratic function of $\alpha_t$, with minimum achieved at $\alpha_t=\frac{1}{\w_{t-1}^\top (\lambda \I - \A) \w_{t-1}}$. 
With this choice of $\alpha_t$, we obtain
\begin{align}
  & \qquad \epsilon_t^{init} \le f_t (\beta_1 \w_{t-1}) - f_t^*  \nonumber \\
  & =  \frac{\beta_1^2}{2} \cdot \w_{t-1}^\top (\lambda \I - \A) \w_{t-1} - \beta_1 + \frac{1}{2} \w_{t-1}^\top (\lambda \I - \A)^{-1} \w_{t-1} \nonumber \\
  & = \frac{\beta_1^2}{2} \sum_{i=1}^d \xi_{(t-1)i}^2 / \beta_i  - \beta_1 \sum_{i=1}^d \xi_{(t-1)i}^2 + \frac{1}{2} \sum_{i=1}^d \beta_i \xi_{(t-1)i}^2 \nonumber \\
  & \le \frac{1}{2} \sum_{i=1}^d \left( \beta_1 - \beta_i \right)^2 \cdot \xi_{(t-1)i}^2 / \beta_i \nonumber \\
  & \le \frac{\beta_1^2}{2} \sum_{i=2}^d \xi_{(t-1)i}^2 / \beta_i. \label{e:warm-start-accurate}
\end{align}

As we show in the next lemma, this initialization will allow the ratio between initial and final error to be a constant in the accurate regime, independent of the final suboptimality in alignment. 

In the crude regime, we simply use the rough bound 
\begin{align}
  \epsilon_t^{init} & \le f_t (\0) - f_t^* = \frac{1}{2} \w_{t-1}^\top (\lambda \I - \A)^{-1} \w_{t-1} \nonumber \\
& = \frac{1}{2} \sum_{i=1}^d \beta_i \xi_{(t-1)i}^2  \le  \frac{\beta_1}{2}. \label{e:warm-start-crude}
\end{align}

Substituting~\eqref{e:warm-start-accurate} and~\eqref{e:warm-start-crude} into Lemma~\ref{lem:iteration-complexity-inexact-power-method} yields Lemma~\ref{lem:ft_ratio}.

\section{Proof of Lemma~\ref{lem:shift-and-invert-repeat-until}}

\begin{proof}

  Observe that the eigenvalues of $\M_{\lambda}=(\lambda \I - \A)^{-1}$ are functions of $\lambda$ as $\lambda$ changes over iterations. Below we use $\sigma_i \left( \M_{\lambda} \right)$ to denote the i-th eigenvalue of $\M_{\lambda}$ instead of $\beta_1,\dots,\beta_d$ to make this dependence explicit.

  Define the condition number of $\M_{\lambda}$ as
  \begin{align*}
    \kappa_{\lambda} : = \frac{\sigma_1(\M_{\lambda})}{\sigma_d(\M_{\lambda})} = \frac{\frac{1}{\lambda-\rho_1}}{\frac{1}{\lambda-\rho_d}} = \frac{\lambda - \rho_d}{\lambda - \rho_1},
  \end{align*}

  Suppose we have the upper and lower bound of all  $\sigma_1 (\M_{\lambda_{(s)}})$ used in the \textbf{repeat-until} loop: 
  \begin{align*}
    \overline{\sigma} \ge \sigma_1 (\M_{\lambda_{(s)}}), \quad \underline{\sigma} \ge \sigma_1 (\M_{\lambda_{(s)}}), \quad \text{for} \;  s=1,\dots,
  \end{align*}
  whose specific values will be provided shortly.

  Throughout the loop, we require for all iteration $s$ that 
  \begin{align} \label{e:delta-s-1}
    \sqrt{ 2 \overline{\sigma} \te_s } \le \frac{\underline{\sigma}}{8}. 
  \end{align}

  Let the power iterations for different $s$ start from the same initialization $\w_0$ with $\xi_{01} = \w_0^\top \p_1$, and apply Lemma~\ref{lem:iteration-complexity-inexact-power-method} (crude regime) with $\epsilon=\frac{1}{4}$. By our choice of $m_1$ and setting the ratio 
  \begin{align}  \label{e:epsilon-1}
    \frac{\epsilon_t^{init}}{\epsilon_t} =  1024 \kappa_{\lambda_{(s)}}^2 \left( \frac{ (2 \kappa_{\lambda_{(s)}})^{m_1} -1}{2 \kappa_{\lambda_{(s)}} - 1 } \right)^2
  \end{align}
  according to Lemma~\ref{lem:ft_ratio} (crude regime), we obtain 
  \begin{align} \label{e:delta-s-2}
    \w_{s m_1}^\top \M_{\lambda_{(s-1)}} \w_{s m_1} \ge \frac{3}{4} \sigma_1 (\M_{\lambda_{(s-1)}}).
  \end{align}

  In view of the definition of the vector $\uu_s$, and following the same argument in~\eqref{e:crude-errors-1}, we have
  \begin{align*}
    \norm{ {\uu_s} - \M_{\lambda_{(s-1)}} \w_{s m_1} } \le \sqrt{ 2 \sigma_1 (\M_{\lambda_{(s-1)}}) \cdot \te_s}. 
  \end{align*}

  Then for every iteration $s$ of \textbf{repeat-until}, it holds that
  \begin{align*}
    & \w_{s m_1}^\top \uu_s  \\
    =\ & \w_{s m_1}^\top \M_{\lambda_{(s-1)}} \w_{s m_1} + \w_{s m_1}^\top \left( \uu_s - \M_{\lambda_{(s-1)}} \w_{s m_1} \right)  \\
    \in\, & \left[ \w_{s m_1}^\top \M_{\lambda_{(s-1)}} \w_{s m_1} - \sqrt{ 2 \sigma_1 (\M_{\lambda_{(s-1)}}) \cdot \te_s },\ \right.\\
    & \qquad \left. \w_{s m_1}^\top \M_{\lambda_{(s-1)}} \w_{s m_1} + \sqrt{ 2 \sigma_1 (\M_{\lambda_{(s-1)}}) \cdot \te_s } \right] \\
    \in\, & \left[ \w_{s m_1}^\top \M_{\lambda_{(s-1)}} \w_{s m_1} - \sqrt{ 2 \overline{\sigma} \te_s },\ \right.\\
   & \qquad \left. \w_{s m_1}^\top \M_{\lambda_{(s-1)}} \w_{s m_1} + \sqrt{ 2 \overline{\sigma} \te_s } \right] \\
    \in\, & \left[ \w_{s m_1}^\top \M_{\lambda_{(s-1)}} \w_{s m_1} - \frac{\underline{\sigma}}{8},\ \w_{s m_1}^\top \M_{\lambda_{(s-1)}} \w_{s m_1} + \frac{\underline{\sigma}}{8} \right],
  \end{align*}
  where we have used the Cauchy-Schwarz inequality in the second step and~\eqref{e:delta-s-1} in the last step.

  In view of~\eqref{e:delta-s-1} and~\eqref{e:delta-s-2}, it follows that
  \begin{align*}
    & \w_{s m_1}^\top \uu_s - \underline{\sigma}/8  \\
    \in\ & \left[ \w_{s m_1}^\top \M_{\lambda_{(s-1)}} \w_{s m_1} - \frac{\underline{\sigma}}{4},\ \w_{s m_1}^\top \M_{\lambda_{(s-1)}} \w_{s m_1} \right] \\
    \in\ & \left[ \frac{3}{4} \sigma_1 (\M_{\lambda_{(s-1)}}) - \frac{\underline{\sigma}}{4},\ \w_{s m_1}^\top \M_{\lambda_{(s-1)}} \w_{s m_1} \right] \\
    \in\ & \left[ \frac{1}{2} \sigma_1 (\M_{\lambda_{(s-1)}}),\ \sigma_1 (\M_{\lambda_{(s-1)}})\right].
  \end{align*}

  By the definition of $\Delta_s$ in Algorithm~\ref{alg:meta-shift-and-invert} and the fact that $\sigma_1 (\M_{\lambda_{(s-1)}}) = \frac{1}{\lambda_{(s-1)}-\rho_1}$, we have 
  \begin{align} 
    \Delta_s  & = \frac{1}{2}\cdot \frac{1}{\w_{s m_1}^\top \uu_s - \underline{\sigma}/8 } \nonumber \\
    & \in \left[ \frac{1}{2} \left( \lambda_{(s-1)}-\rho_1 \right),\ \lambda_{(s-1)}-\rho_1 \right]. \label{e:lambda-recurse}
  \end{align}

  And as a result,
  \begin{align*}
    \lambda_{(s)} & = \lambda_{(s-1)} - \frac{\Delta_s}{2} 
    \ge \lambda_{(s-1)} - \frac{1}{2} \left( \lambda_{(s-1)} - \rho_1 \right) \\
    & = \frac{\lambda_{(s-1)} + \rho_1}{2},
  \end{align*}
  and thus by induction (note $\lambda_{(0)} \ge \rho_1$) we have $\lambda_{(s)} \ge \rho_1$ throughout the \textbf{repeat-until} loop.

  From~\eqref{e:lambda-recurse} we also obtain
  \begin{align*}
    \lambda_{(s)} - \rho_1 & =  \lambda_{(s-1)}  - \rho_1  - \frac{\Delta_s}{2} \\
    & \le \lambda_{(s-1)}  - \rho_1 - \frac{1}{4} \left( \lambda_{(s-1)} - \rho_1 \right) \\
    & = \frac{3}{4} \left( \lambda_{(s-1)} - \rho_1 \right).
  \end{align*}

  To sum up, $\lambda_{(s)}$ approaches $\rho_1$ from above and the gap between $\lambda_{(s)}$ and $\rho_1$ reduces at the geometric rate of $\frac{3}{4}$. Thus after at most $T_3=\ceil{ \log_{3/4} \left( \frac{\tilde{\Delta}}{\lambda_{(0)} - \rho_1} \right) } = \calO\left( \log \left( \frac{1}{\tilde{\Delta}} \right) \right)$ iterations, we reach a $\lambda_{(T_3)}$ such that $\lambda_{(T_3)} - \rho_1 \le \tilde{\Delta}$. And in view of~\eqref{e:lambda-recurse}, the \textbf{repeat-until} loop exits in the next iteration. Hence, the overall number of iterations is at most $T_3 + 1 = \calO \left( \frac{1}{\tilde{\Delta}} \right)$.

  We now analyze $\lambda_{(f)}$ and derive the interval it lies in. 
  Note that $\Delta_{f} \le \tilde{\Delta}$ and $\Delta_{f-1} > \tilde{\Delta}$ by the exiting condition. In view of~\eqref{e:lambda-recurse}, we have
  \begin{align*}
    \lambda_{(f)} - \rho_1 & = \lambda_{(f-1)} - \rho_1 - \frac{\Delta_{f}}{2} \le 2 \Delta_f  - \frac{\Delta_{f}}{2} \\
    & = \frac{3 \Delta_f}{2} \le \frac{3 \tilde{\Delta}}{2}.
  \end{align*}

  On the other hand, we have
  \begin{align} 
    \lambda_{(f)} - \rho_1 & = \lambda_{(f-1)} - \rho_1 - \frac{\Delta_{f}}{2} \nonumber \\
& \ge \lambda_{(f-1)} - \rho_1 -  \frac{1}{2} \left( \lambda_{(f-1)} - \rho_1 \right) \nonumber \\
& = \frac{1}{2} \left( \lambda_{(f-1)} - \rho_1 \right). \label{e:lambda-recurse-2}
  \end{align}
  If $f=1$, then by our choice of $\lambda_{(0)}$ we have that $\lambda_{(f)} - \rho_1 \ge \tilde{\Delta}$. Otherwise, by unfolding~\eqref{e:lambda-recurse-2} one more time, we have that
  \begin{align*} 
    \lambda_{(f)} - \rho_1 \ge \frac{1}{4} \left( \lambda_{(f-2)} - \rho_1 \right) 
    \ge \frac{\Delta_{f-1}}{4} \ge \frac{\tilde{\Delta}}{4}.
  \end{align*}
  Thus in both case, we have that $\lambda_{(f)} - \rho_1 \ge \frac{\tilde{\Delta}}{4}$ holds.

  Since the $\lambda_{(s)}$ values are monotonically non-increasing and lower-bounded by $\rho_1 + \frac{\tilde{\Delta}}{4}$, we have 
  \begin{align*}
    \max_{s}\ \sigma_{1} (\M_{\lambda_{(s)}}) = \sigma_{1} (\M_{\lambda_{(f)}}) = \frac{1}{\lambda_{(f)} - \rho_1 } \le \frac{4}{\tilde{\Delta} } =: \overline{\sigma}, 
  \end{align*}
  and 
  \begin{align*}
    \min_{s}\ \sigma_{1} (\M_{\lambda_{(s)}}) & = \sigma_{1} (\M_{\lambda_{(0)}}) = \frac{1}{\lambda_{(0)} - \rho_1 } = \frac{1}{ 1 + \tilde{\Delta} - \rho_1 } \\
    & \ge \frac{1}{ 1 + c_2 \Delta - \Delta } \ge 1 + (1 - c_2) \Delta \\
    & \ge 1 + \frac{1 - c_2}{c_2} \tilde{\Delta} =: \underline{\sigma},
  \end{align*}
  where the first inequality holds since by definition of $\Delta$ it follows that $\rho_1=\rho_2+\Delta \ge \Delta$.

  Then according to~\eqref{e:delta-s-1}, we can set  
  \begin{align*}  
    \te_s = \frac{ \underline{\sigma}^2 }{128 \overline{\sigma}} =  
    \frac{ \left( 1 + \frac{1 - c_2}{c_2} \tilde{\Delta} \right)^2  }{128 \cdot \frac{4}{\tilde{\Delta}}} 
    \ge \frac{ 1 }{128 \cdot \frac{4}{\tilde{\Delta}}} = \frac{\tilde{\Delta}}{512}.
  \end{align*}
  Because the initialization for minimizing $l_s(\uu)$ gives an initial suboptimality of $\te_s^{init}\le \frac{\sigma_1 \left( \M_{\lambda_{(s-1)}} \right)}{2} \le \frac{\overline{\sigma}}{2}=\frac{2}{\tilde{\Delta}}$, we achieve $\te_s$-suboptimality by requiring $\frac{\te_s^{init}}{\te_s} = \frac{1024}{\tilde{\Delta}^2}$.

  It remains to fulfill the requirement~\eqref{e:epsilon-1} for \textbf{repeat-until}. 
  Note that the condition numbers are bounded throughout:
  \begin{align} 
    \kappa_{\lambda_{(s)}} & = \frac{\lambda_{(s)} - \rho_d}{\lambda_{(s)} - \rho_1} \le \frac{\lambda_{(s)}}{\lambda_{(s)} - \rho_1} = 1 + \frac{\rho_1}{\lambda_{(s)} - \rho_1} \nonumber \\
    & \le   1 + \rho_1 \cdot \sigma_{1} (\M_{\lambda_{(s)}}) 
    \le 1 + \frac{4}{\tilde{\Delta}}
    \le \frac{5}{\tilde{\Delta}}.   \label{e:kappa-f}
  \end{align}
  We can then bound the ratio in~\eqref{e:epsilon-1} for all least squares subproblems:
  \begin{align*}
    \frac{\epsilon_t^{init}}{\epsilon_t} \le \frac{1024 \cdot 25}{\tilde{\Delta}^2} \left( \frac{ (10 / \tilde{\Delta})^{m_1}}{ 9 / \tilde{\Delta} } \right)^2 \le \frac{32 \cdot 10^{2 m_1+1}}{\tilde{\Delta}^{2 m_1}}.
  \end{align*}
\end{proof}

\section{Proof of Lemma~\ref{lem:shift-and-invert-for-loop}}

\begin{proof}
  Observe that for $\lambda= \rho_1 + c ( \rho_1 - \rho_2)$, we have
  \begin{align*}
    \frac{\sigma_1 (\M_{\lambda})}{ \sigma_1 (\M_{\lambda}) - \sigma_2 (\M_{\lambda}) } 
    & = \frac{\frac{1}{\lambda - \rho_1}}{\frac{1}{\lambda - \rho_1}  - \frac{1}{\lambda-\rho_2} } = \frac{\lambda - \rho_2}{\rho_1 - \rho_2} \\
    & = \frac{ \rho_1 + c (\rho_1 - \rho_2) - \rho_2}{\rho_1 - \rho_2} = c+1
  \end{align*}
  and 
  \begin{align*}
    \frac{3 \sigma_1 (\M_{\lambda}) + \sigma_2 (\M_{\lambda})}{\sigma_1 (\M_{\lambda}) + 3 \sigma_1 (\M_{\lambda})}
    = \frac{\frac{3}{\lambda - \rho_1} + \frac{1}{\lambda - \rho_2}}{\frac{1}{\lambda - \rho_1} + \frac{3}{\lambda - \rho_2}} = \frac{4c+3}{4c+1}.
  \end{align*}
  
  In view of~\eqref{e:delta-s-interval}, we have $\lambda_{(f)} - \rho_1 \le \frac{3}{2} \tilde{\Delta} \le  \frac{3 c_2}{2} {\Delta} \le  \frac{3}{2} {\Delta}$, \ie, $c \le \frac{3}{2}$ for $\lambda_{(f)}$. 
  As a result, we can apply Lemma~\ref{lem:iteration-complexity-inexact-power-method} (accurate regime) with $\gamma=\frac{9}{7}$, and we are guaranteed to achieve the desired alignment with the specified $m_2$. The choice of $\frac{\epsilon_t^{init}}{\epsilon_t}$ follows from an application of Lemma~\ref{lem:ft_ratio} (accurate regime) with $\frac{\beta_1^2}{\left( \beta_1 - \beta_2 \right)^2} \le \frac{25}{4}$.
\end{proof}

\section{Details of experiments on networks}
\label{sec:statistics-realdata}

Table~\ref{tab:data} gives the statistics of the real-world networks data used in our experiments.

 \begin{table}[h]
 \begin{center}
 \caption{List of network datasets used in the experiments.}
 \label{tab:data}
 \begin{tabular}{|c|c|c|}\hline
 Name  & $\#$Nodes & $\#$Edges  
 \\
 \hline\hline
 amazon0601        & 403,394  & 3,387,388 \\
 com-Youtube        & 1,134,890 & 2,987,624 \\
 email-Enron             &36,692  &183,831    \\
 email-EuAll       &265,214  &420,045       \\
 p2p-Gnutella31    &62,586  & 147,892  \\
 roadNet-CA        & 1,965,206 & 2,766,607	 \\
 roadNet-PA        & 1,379,917 & 1,921,660	 \\
 soc-Epinions1     & 75,879         & 508,837            \\
 web-BerkStan      & 685,230    & 7,600,595  \\
 web-Google       & 875,713  & 5,105,039 \\
 web-NotreDame    & 325,729	         &  	1,497,134       \\
 wiki-Talk        & 2,394,385 & 5,021,410 \\
 \hline
 \end{tabular}
 \end{center}
 \end{table}

For power method, CPM and SGCD, we adopt the C++ implementation 
 of~\citet{Lei_16a} and implement our algorithms alongside theirs, and run all experiments on a Linux machine with Intel i7 CPU of frequency 3.20GHz. We use $4$ passes of coordinate updates for solving least squares problems and set $\tilde{\Delta} = 0.05$ in Algorithm 1. 

\end{document}